\setlist{labelindent=1pt,itemsep=0.1cm}
\setlist[itemize]{leftmargin=0.7cm}
\setlist[enumerate]{itemindent=0em,leftmargin=0.7cm}
\begin{document}
\title*{Unique common fixed points of four generalized contractive mappings in ordered partial metric spaces}
\titlerunning{Unique common fixed points of four generalized contractive mappings} 
\author{Talat Nazir \and Sergei Silvestrov}
\authorrunning{T. Nazir, S. Silvestrov} 

\institute{Talat Nazir,
\at
Department of Mathematical Sciences, University of South Africa, Florida 0003, South Africa. \\ \email{talatn@unisa.ac.za}
\and
Sergei Silvestrov
\at Division of Mathematics and Physics, School of Education, Culture and Communication, M{\"a}lardalen University, Box 883, 72123 V{\"a}ster{\aa}s, Sweden. \\  \email{sergei.silvestrov@mdu.se}
\and }
%
%


\maketitle
\label{chap:NazirSilvestrov:UCFPOPMS}

\abstract*{The existence and uniqueness of the common fixed point for generalized contractive mappings in order partial metric spaces is investigated. The existence of nonnegative solution of implicit
	nonlinear integral equations is also studied. Some examples demonstrating the validity of our
	main results are constructed. The presented results extend and unify various comparable
	results in the existing literature.
\keywords{Common fixed point $\cdot$ generalized contractive mapping $\cdot$ partially ordered set $\cdot$ partial metric space}\\
{\bf MSC 2020:} 47H09, 47H10, 54C60, 54H25}

\abstract{The existence and uniqueness of the common fixed point for generalized contractive mappings in order partial metric spaces is investigated. The existence of nonnegative solution of implicit
	nonlinear integral equations is also studied. Some examples demonstrating the validity of our
	main results are constructed. The presented results extend and unify various comparable
	results in the existing literature.
\keywords{Common fixed point, generalized contractive mapping, partially ordered set, partial metric space}\\
{\bf MSC 2020:} 47H09, 47H10, 54C60, 54H25}

\section{Introduction}
\label{secNaSe6:Introduction}
Fixed point theory is a one of the powerful tool, noteworthy and stimulating
themes of nonlinear functional analysis that blends topology, analysis and
applied mathematics. In the fixed-point technique, the controllability
problem is converted to a fixed-point problem for an applicable nonlinear
operator in a function space. An essential part of this approach is to
guarantee the solvability of the equations for an invariant subset for this operator.

Alber and Guerre-Delabrere \cite{Alber} introduced the concept of weakly
contractive mappings and proved that weakly contractive mapping defined on a
Hilbert space is a Picard operator. Later, Rhoades \cite{Rhoades} proved
that the corresponding result is also valid when Hilbert space is replaced
by a complete metric space. Dutta \textit{et al.} \cite{Dutta} generalized
the weak contractive condition and proved a fixed point theorem for a self
map, which in turn generalizes \cite[Theorem 1]{Rhoades} and the
corresponding result in \cite{Alber}. The study of common fixed points of
mappings satisfying certain contractive conditions has been at the center of
rigorous research activity. The study of common fixed point theory for
sets of several single valued maps, started with the assumption that all of
the maps commuted. Sessa \cite{Sessa} generalized the concept of commuting
maps and introducing the weakly commuting maps. Then, Jungck generalized
this idea, first to compatible mappings \cite{Jungck1} and then to weakly
compatible mappings \cite{Jungck2}. There are examples that show that each
of these generalizations of commutativity is a proper extension of the
previous definition. On the other hand, Beg and Abbas \cite{Beg} obtained a
common fixed point theorem extending weak contractive condition for two
maps. In this direction, Zhang and Song \cite{Zhang} introduced the concept
of a generalized $\varphi$-weak contraction condition and obtained a
common fixed point for two maps. Doric \cite{Doric} proved a common fixed
point theorem for generalized $(\psi ,\varphi )$-weak contractions. Abbas
and Doric \cite{Abbas} obtained a common fixed point theorem for four maps
that satisfy contractive condition which is more general than that given in
\cite{Zhang}.

In 2004, Ran and Reurings \cite{Ran} investigated the existence of fixed
points in partially ordered metric spaces, and then by Nieto and Lopez \cite{Neito}.
Further results in this direction under weak contractive condition
were proved (see for example \cite{ATS, Amini, Ciric, Harjani, Hussain, Nashine, Petrusel, Radenovic, Regan}).

In 2011, Abbas et al.  \cite{ATS} presented some common fixed point
theorems for generalized $\left(\psi ,\varphi \right)$-weakly contractive
mappings in partially ordered metric spaces. Further, Radenovi\'{c} and
Kadelburg \cite{Radenovic} proved a result for generalized weak contractive
mappings in partially ordered metric spaces.

Partial metric space is a generalized metric space in which each object does
not necessarily have to have a zero distance from itself \cite{Matthews}. A
motivation behind introducing the concept of a partial metric was to obtain
appropriate mathematical models in the theory of computation \cite{Aydi14, Heckmann, Matthews2, Schellekens}. Altun and Erduran \cite{Altun}, Oltra
and Valero \cite{Oltra} and Valero \cite{Valero} established some further
generalizations of the results in \cite{Matthews}, and Romaguera \cite{Romaguera} proved
a Caristi type fixed point theorem on partial metric spaces.
Karapinar \cite{Kara} proved some fixed point theorems for weak $\varphi$-contraction on partial metric spaces in partially ordered sets. Further results in the direction of partial metric space were proved in
\cite{AT, ATR, Aydi, Bukatin, Bukatin2, Romaguera1, WasfiSametAbbas}.

It was shown that, in some cases, the results of fixed point in partial
metric spaces can be obtained directly from their induced metric
counterparts \cite{HRS13, JKS13, SVV13}. However, some conclusions important
for the application of partial metrics in information sciences cannot be
obtained in this way. For example, if $u$ is a fixed point of map $f$, then,
by using the method from \cite{HRS13}, we cannot conclude that $p(fu,fu)=0=p(u,u)$. For further details, we refer the reader to \cite{NK13,NKR13}.

Our aim is to study the unique common fixed point results for four mappings
satisfying generalized contractive conditions in the setup of ordered
partial metric spaces.

In the sequel, $\mathbb{R},$ $\mathbb{R}_{\geq 0}$ and $\mathbb{Z}_{\geq 0}$ will denote the set of all real numbers, the set of all nonnegative real
numbers and the set of all non-negative integers, respectively. The usual order
on $\mathbb{R}$ (respectively, on $\mathbb{R}_{\geq 0})$ will be indistinctly
denoted by $\leq $ or $\geq$.

Consistent with \cite{Altun} and \cite{Matthews}, the following definitions
and results will be needed in the sequel.\newline

\begin{definition}
	\label{NaSeDefinition1.1.}
	Let $X$ be a nonempty set. A function
	$p:X\times X\rightarrow \mathbb{R}_{\geq 0}$ is said to be a partial metric on $X$
	if for any $x,y,z\in X,$ the following conditions hold true:
\begin{enumerate}[label=\textup{\arabic*)}, ref=\arabic*]
		\item   $p(x,x)=p(y,y)=p(x,y)$ if and only if $x=y;$
		
		\item  $p(x,x)\leq p(x,y);$
		
		\item  $p(x,y)=p(y,x);$
		
		\item   $p(x,z)\leq p(x,y)+p(y,z)-p(y,y)$.
	\end{enumerate}	
 The pair $(X,p)$ is then called a partial metric space.
\end{definition}
 If $p(x,y)=0$, then (${1}$) and (${2}$) imply that $x=y$. But
the converse does not always hold.

 A trivial example of a partial metric space is the pair $(\mathbb{R}_{\geq 0},p)$, where the partial metric $p:\mathbb{R}_{\geq 0}\times \mathbb{R}_{\geq 0}\rightarrow \mathbb{R}_{\geq 0}$ is
defined as $p(x,y)=\max \{x,y\}$.

\begin{example} (\cite{Matthews})\
	\label{NaSeExam1.2.}
	 If $X=\{[a,b]:a,b\in \mathbb{
		R},a\leq b\},$ then $$p([a,b],[c,d])=\max \{b,d\}-\min \{a,c\}$$ defines a
	partial metric $p$ on $X$.
\end{example}
 For some more examples of partial metric spaces, we refer to \cite{AT, ATR,
	Altun, Bukatin2, Romaguera, Schellekens}.\smallskip

Each partial metric $p$ on $X$ generates a $T_{0}$ topology $\tau _{p}$ on $
X $ which has as a base the family open $p$-balls $\{B_{p}(x,\varepsilon
):x\in X,\varepsilon >0\},$ where $B_{p}(x,\varepsilon )=\{y\in
X:p(x,y)<p(x,x)+\varepsilon \},$ for all $x\in X$ and $\varepsilon >0$.\smallskip

Observe (see \cite[p. 187]{Matthews}) that a sequence $\{x_{n}\}$ in a
partial metric space $X$ converges to a point $x\in X$, with respect to $
\tau _{p},$ if and only if $p(x,x)=\lim \limits_{n\rightarrow \infty
}p(x,x_{n})$.\smallskip

If $p$ is a partial metric on $X$, then the function $p^{S}:X\times
X\rightarrow \mathbb{R}_{\geq 0}$ given by $p^{S}(x,y)=2p(x,y)-p(x,x)-p(y,y)$
defines a metric on $X$.

Furthermore, a sequence $\{x_{n}\}$ converges in $(X,p^{S})$ to a point
$x\in X$ if and only if
\begin{equation}
	\lim_{n,m\rightarrow \infty }p(x_{n},x_{m})=\lim_{n\rightarrow \infty}p(x_{n},x)=p(x,x).  \notag
\end{equation}

\begin{definition}[\cite{Matthews}]
	\label{NaSeDefinition1.3.} A sequence $
	\{x_{n}\}$ in a partial metric space $X$ is said to be a Cauchy sequence if $
	\lim \limits_{n,m\rightarrow \infty }p(x_{n},x_{m})$ exists and is finite.
\end{definition}
 A partial metric space $X$ is said to be complete if every Cauchy sequence $
\{x_{n}\}$ in $X$ converges with respect to $\tau _{p}$ to a point $x\in X$
such that $\lim \limits_{n\rightarrow \infty }p(x,x_{n})=p(x,x)$. In this
case, we say that the partial metric $p$ is complete.
\begin{lemma}[\cite{Altun,Matthews}]
	\label{NaSeLemma1.4.}
	Let $X$ be a
	partial metric space.
	
	\begin{enumerate}[label=\textup{(\roman*)}, ref=(\roman*)]
		\item A sequence $\{x_{n}\}$ in $X$ is a Cauchy sequence in $X$ if and
		only if it is a Cauchy sequence in metric space $(X,p^{S})$.
		\item A partial metric space $(X,p)$ is complete if and only if the
		metric space $(X,p^{S})$ is complete.
	\end{enumerate}
\end{lemma}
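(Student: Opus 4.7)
Both parts rest on the identity
$$p^{S}(x,y)=2p(x,y)-p(x,x)-p(y,y)$$
together with the axiom $p(x,x)\le p(x,y)$ and the fact (already observed in the excerpt) that $p^{S}$ is a metric, so $p^{S}\ge 0$.

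For (i), the forward implication is direct: if $\lim_{n,m\to\infty}p(x_{n},x_{m})=L$ is finite, then specializing to $m=n$ forces $p(x_{n},x_{n})\to L$, and the identity gives $p^{S}(x_{n},x_{m})\to 2L-L-L=0$. For the converse, I would first establish the estimate $|p(x_{n},x_{n})-p(x_{m},x_{m})|\le p^{S}(x_{n},x_{m})$, which follows by substituting $p(x_{n},x_{n}),p(x_{m},x_{m})\le p(x_{n},x_{m})$ into the identity. This shows $\{p(x_{n},x_{n})\}$ is Cauchy in $\mathbb{R}$, hence converges to some $\alpha\ge 0$; then the identity, rewritten as $2p(x_{n},x_{m})=p^{S}(x_{n},x_{m})+p(x_{n},x_{n})+p(x_{m},x_{m})\to 2\alpha$, yields $p(x_{n},x_{m})\to\alpha$, so $\{x_{n}\}$ is Cauchy in $(X,p)$.

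For (ii), I would use (i) to transport Cauchyness between $p$ and $p^{S}$. Assuming $(X,p)$ is complete and $\{x_{n}\}$ is $p^{S}$-Cauchy, part (i) and completeness produce a $\tau_{p}$-limit $x$ with $\lim p(x,x_{n})=p(x,x)$. The crux is to identify $\alpha:=\lim p(x_{n},x_{n})$ with $p(x,x)$: the triangle inequality $p(x,x)\le p(x,x_{n})+p(x_{n},x)-p(x_{n},x_{n})$ gives $\alpha\le p(x,x)$, while $p^{S}(x,x_{n})\ge 0$ rearranges to $2p(x,x_{n})\ge p(x,x)+p(x_{n},x_{n})$, whose limit yields $p(x,x)\ge\alpha$. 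With $\alpha=p(x,x)$, the $p^{S}$ identity forces $p^{S}(x,x_{n})\to 0$. Conversely, if $(X,p^{S})$ is complete and $\{x_{n}\}$ is $p$-Cauchy, part (i) gives a $p^{S}$-limit $x$; combining $p(x_{n},x_{n})\le p(x,x_{n})$ with $p^{S}(x,x_{n})\to 0$ one squeezes $p(x,x_{n})$ between $p(x,x)$ (from axiom 2) and $2p(x,x_{n})-p(x_{n},x_{n})\to p(x,x)$, obtaining $\lim p(x,x_{n})=p(x,x)$ and, simultaneously, the required $\tau_{p}$-convergence.

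The step I expect to be the main obstacle is the identification $\alpha=p(x,x)$ in the forward direction of (ii): routine triangle-inequality manipulations readily yield $\alpha\le p(x,x)$, but the opposite estimate seems to genuinely require the slightly less obvious device of reading the nonnegativity of $p^{S}$ as a lower bound $2p(x,x_{n})\ge p(x,x)+p(x_{n},x_{n})$.
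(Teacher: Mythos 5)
The paper does not prove this lemma at all — it is quoted from Altun--Erduran and Matthews — so there is no in-paper argument to compare against; I can only assess your proof on its own terms. Part (i) is correct: the diagonal specialization $m=n$ for the forward implication and the estimate $|p(x_n,x_n)-p(x_m,x_m)|\le p^S(x_n,x_m)$ for the converse both check out, and so does the backward half of (ii), where the squeeze $p(x,x)\le p(x,x_n)\le 2p(x,x_n)-p(x_n,x_n)\to p(x,x)$ is valid.

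The forward half of (ii), however, has a genuine gap at exactly the step you flag as the crux. You claim to get $\alpha\le p(x,x)$ from the triangle inequality and $p(x,x)\ge\alpha$ from $p^S(x,x_n)\ge 0$, but these are the \emph{same} inequality: $2p(x,x_n)\ge p(x,x)+p(x_n,x_n)$ passes in the limit to $2p(x,x)\ge p(x,x)+\alpha$, i.e.\ again $\alpha\le p(x,x)$. The inequality you actually need, $p(x,x)\le\alpha$, is not merely unproven — it is false for an arbitrary $\tau_p$-limit. In $(\mathbb{R}_{\ge 0},\max)$ take $x_n=1/n$, so $\alpha=\lim_{n,m}p(x_n,x_m)=0$; the point $x=1$ is a legitimate $\tau_p$-limit since $\lim_n p(1,x_n)=1=p(1,1)$, yet $p^S(x_n,1)=|1/n-1|\to 1\ne 0$. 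So if ``complete'' is read literally as stated in this paper (every Cauchy sequence has \emph{some} $\tau_p$-limit $x$ with $\lim_n p(x,x_n)=p(x,x)$), your argument cannot be completed for the limit point the hypothesis hands you. The repair is to use the definition of completeness from the cited sources, which requires the limit $x$ to satisfy $p(x,x)=\lim_{n,m}p(x_n,x_m)$; then $\alpha=p(x,x)$ holds by hypothesis and $p^S(x,x_n)=2p(x,x_n)-p(x,x)-p(x_n,x_n)\to 0$ at once.
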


Two self maps $f$ and $g$ on $X$ are said to be compatible if, whenever $
\{x_{n}\}$ in $X$ such that $\lim \limits_{n\rightarrow \infty
}p^{S}(fx_{n},x)=0$ and$\  \lim \limits_{n\rightarrow \infty
}p^{S}(gx_{n},x)=0 $ for some $x\in X,$ then $\lim \limits_{n\rightarrow
	\infty }p^{S}(fgx_{n},gfx_{n})=0.$

If\ $fx=gx$ for some $x$ in $X$, then $x$ is called a coincidence point of $
f $ and $g$. Furthermore, if the mappings are commuting on their coincidence
point, then such mappings are called weakly compatible, \cite{Jungck2}.

\begin{definition}
	\label{NaSeDefinition1.5.}
	Let $X$ be a nonempty set. Then $
	(X,\preceq ,p)$ is called an ordered partial metric space if
\begin{enumerate}[label=\textup{(\roman*)}, ref=(\roman*)]
		\item $p$ is a partial metric on $X,$
\item $\preceq $ is a partial order on $X$.
\end{enumerate}
\end{definition}
We say that the elements $x,y\in X$ are called comparable if either $
x\preceq y$ or $y\preceq x$ holds.

\begin{definition}[\cite{ATS}]
	\label{NaSeDefinition1.6.} Let $(X,\preceq )$ be a
	partially ordered set and $f\ $and $g$ be two self-maps of $X.$ Mapping $f$
	is said to be dominated if $fx\preceq x$ for each $x$ in $X$. A mapping $g$
	is said to be dominating if $x\preceq gx$ for each $x$ in $X.$
\end{definition}

\begin{example}
	\label{NaSeExam1.7.}
	Let $X=[0,1]$ be endowed with usual
	ordering. Let $f,g:X\rightarrow X$ defined by $fx=\dfrac{x}{k}$ and $gx=kx$
	for any positive real number $k\geq 1.$ It is easy to see that $f$ is
	dominated and $g$ is a dominating map.
\end{example}

Zhang and Song \cite{Zhang} obtained the following common fixed point result
in metric spaces for a generalized $\varphi$-weak contraction.

\begin{theorem}[\cite{Zhang}]
	\label{NaSeTheorem1.8.} Let $(X,d)$ be a complete
	metric space, and let $f,g:X\rightarrow X$ be two self-mappings such that
	for all $x,y\in X,$ $d(fx,gy)\leq M(x,y)-\varphi (M(x,y))$ holds, where $
	\varphi :[0,\infty )\rightarrow \lbrack 0,\infty )$ is a lower
	semi-continuous function with $\varphi \left( t\right) >0$ for $t\in \left(
	0,\infty \right) $, $\varphi (0)=0,$ and
	\begin{equation*}
		M(x,y)=\max \{d(x,y),d(fx,x),d(gy,y),\dfrac{d(x,gy)+d(fx,y)}{2}\}.
	\end{equation*}
	Then there exists a unique point $u\in X$ such that $u=fu=gu$.
\end{theorem}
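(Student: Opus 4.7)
The plan is to follow the classical Picard-type argument, adapted to the generalized weak contraction involving $M(x,y)$, in four stages: construction of an iterative sequence, proof that consecutive distances vanish, the Cauchy property, and identification and uniqueness of the common fixed point.

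First I would fix an arbitrary $x_0 \in X$ and build a sequence $\{x_n\}$ by alternating $f$ and $g$: set $x_{2n+1}=fx_{2n}$ and $x_{2n+2}=gx_{2n+1}$. Applying the contraction with $x=x_{2n}$, $y=x_{2n+1}$, a direct computation of $M(x_{2n},x_{2n+1})$ collapses (using $d(x_{2n},x_{2n+2}) \leq d(x_{2n},x_{2n+1})+d(x_{2n+1},x_{2n+2})$) to $\max\{d(x_{2n},x_{2n+1}),d(x_{2n+1},x_{2n+2})\}$. If this max were $d(x_{2n+1},x_{2n+2})$ the inequality would force $\varphi=0$ at that value, so necessarily $M = d(x_{2n},x_{2n+1})$, giving $d(x_{2n+1},x_{2n+2}) \leq d(x_{2n},x_{2n+1}) - \varphi(d(x_{2n},x_{2n+1}))$. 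The same computation works for the other parity, so $d_n := d(x_n,x_{n+1})$ is non-increasing and thus converges to some $r \geq 0$. Taking the $\limsup$ and using lower semi-continuity of $\varphi$ yields $r \leq r - \varphi(r)$, hence $\varphi(r) \leq 0$, forcing $r = 0$ by the positivity hypothesis on $\varphi$.

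Next I would establish that $\{x_n\}$ is Cauchy by the usual contradiction argument: assuming otherwise, choose $\varepsilon>0$ and subsequences $m_k > n_k \geq k$ with $d(x_{m_k},x_{n_k}) \geq \varepsilon$ and $d(x_{m_k-1},x_{n_k}) < \varepsilon$, parities arranged so $m_k$ is even and $n_k$ odd. Using the triangle inequality together with $d_n \to 0$, one shows $d(x_{m_k},x_{n_k}) \to \varepsilon$ and similarly $d(x_{m_k \pm 1},x_{n_k \pm 1}) \to \varepsilon$. Applying the contractive condition to the pair $(x_{m_k},x_{n_k})$, each term in $M(x_{m_k},x_{n_k})$ tends to $\varepsilon$, so passing to the limit (using lower semi-continuity of $\varphi$) gives $\varepsilon \leq \varepsilon - \varphi(\varepsilon)$, contradicting $\varphi(\varepsilon)>0$. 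This Cauchy step is the main obstacle, since it requires careful bookkeeping of which indices are even and odd so that $d(fx_{m_k},gx_{n_k})$ actually matches $d(x_{m_k+1},x_{n_k+1})$.

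By completeness there exists $u\in X$ with $x_n \to u$. To show $fu = u$, apply the contraction to $(u, x_{2n+1})$:
\begin{equation*}
d(fu, x_{2n+2}) \leq M(u, x_{2n+1}) - \varphi(M(u, x_{2n+1})).
\end{equation*}
Since $x_n \to u$ and $d_n \to 0$, one computes $M(u,x_{2n+1}) \to d(fu,u)$; taking the $\limsup$ and using lower semi-continuity yields $d(fu,u) \leq d(fu,u) - \varphi(d(fu,u))$, hence $\varphi(d(fu,u))=0$ and therefore $fu = u$. An entirely symmetric argument with the contraction applied to $(x_{2n}, u)$ shows $gu = u$.

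Finally, for uniqueness suppose $v$ is another common fixed point. Then $M(u,v) = \max\{d(u,v), 0, 0, d(u,v)\} = d(u,v)$, so the contractive condition reads $d(u,v) = d(fu,gv) \leq d(u,v) - \varphi(d(u,v))$, which forces $\varphi(d(u,v))=0$ and hence $u=v$.
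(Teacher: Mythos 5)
Your argument is correct, and it is the standard Zhang--Song proof: alternate $f$ and $g$ to build the orbit, collapse $M(x_{2n},x_{2n+1})$ to $\max\{d_{2n},d_{2n+1}\}$ to get monotonicity and $d_n\to 0$, the $\varepsilon$-with-minimal-index contradiction for the Cauchy property (your parity bookkeeping is slightly loose --- one normally extracts the even subsequence and uses $d(x_{2m_k-2},x_{2n_k})<\varepsilon$, then shifts by one via $d_n\to 0$ --- but this is routine and you flag it yourself), then $M(u,x_{2n+1})\to d(fu,u)$ and lower semicontinuity for the fixed point, and $M(u,v)=d(u,v)$ for uniqueness. The paper only cites this theorem without proof, but your scheme is essentially the same one the authors deploy to prove their main Theorem~\ref{NaSeTheorem2.1.} in the more general setting of four maps on an ordered partial metric space, so there is no methodological divergence to report.
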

Aydi in \cite{Aydi} obtained the following result in partial metric spaces
endowed with a partial order.

\begin{theorem}
	\label{NaSeTheorem1.9.}  Let $(X,\leq_{X})$ be a partially
	ordered set and let $p$ be a partial metric on $X$ such that $(X,p)$ is
	complete. Let $f:X\rightarrow X$ be a nondecreasing map with respect to $
	\leq _{X}$. Suppose that the following conditions hold for $y\leq_{X} x$: 	
\begin{enumerate}[label=\textup{(\roman*)}, ref=(\roman*)]
		\item the inequality holds
$$
			p(fx,fy)\leq p(x,y)-\varphi (p(x,y)),  		
$$
where $\varphi :[0,\infty ) \rightarrow \lbrack 0,\infty ) $ is
		a continuous and non-decreasing function such that it is positive in $
		(0,\infty ) $, $\varphi (0)=0$ and $\underset{t\rightarrow \infty }{
			\lim }\varphi (t)=\infty ;$
		
		\item there exist $x_{0}\in X$ such that $x_{0}\leq _{X}fx_{0}$;
		
		\item $f$ is continuous in $(X,p)$, or; \\ if a non-decreasing sequence $\{x_{n}\}$ converges to $x\in X$
		, then $x_{n}\leq _{X}x$ for all $n$.
	\end{enumerate}
	
	Then $f$ has a fixed point $u\in X$. Moreover, $p(u,u)=0$.
\end{theorem}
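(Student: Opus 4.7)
The plan is to run a Picard iteration seeded by $x_0$ from (ii), setting $x_{n+1}=fx_n$. Monotonicity of $f$ together with $x_0\leq_X fx_0$ inductively produces a non-decreasing sequence $x_0\leq_X x_1\leq_X \cdots$. Applying (i) to the comparable pair $x_{n-1}\leq_X x_n$ yields
\[p(x_n,x_{n+1})\leq p(x_{n-1},x_n)-\varphi(p(x_{n-1},x_n)),\]
so $\{p(x_n,x_{n+1})\}$ is non-increasing; letting $L\geq 0$ be its limit, continuity of $\varphi$ and $\varphi>0$ on $(0,\infty)$ force $L=0$, and axiom (2) then gives $p(x_n,x_n)\leq p(x_n,x_{n+1})\to 0$.

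Next I would prove $\{x_n\}$ is Cauchy in $(X,p)$ by the standard contradiction argument: if not, there exist $\varepsilon>0$ and, by a minimality choice on the upper index, subsequences $\{x_{m_k}\},\{x_{n_k}\}$ with $n_k>m_k\geq k$, $p(x_{m_k},x_{n_k})\geq\varepsilon$ and $p(x_{m_k},x_{n_k-1})<\varepsilon$. The partial-metric triangle inequality together with $p(x_i,x_{i+1})\to 0$ yields $p(x_{m_k},x_{n_k})\to\varepsilon$ and $p(x_{m_k-1},x_{n_k-1})\to\varepsilon$. Applying (i) to $(x_{m_k-1},x_{n_k-1})$ and passing to the limit gives $\varepsilon\leq\varepsilon-\varphi(\varepsilon)$, contradicting positivity of $\varphi$. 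By Lemma \ref{NaSeLemma1.4.}, $\{x_n\}$ is Cauchy in the complete metric space $(X,p^S)$, so $x_n\to u$ in $p^S$ for some $u\in X$; the identity $p^S(x_n,x_m)=2p(x_n,x_m)-p(x_n,x_n)-p(x_m,x_m)$ combined with $p(x_n,x_n)\to 0$ then yields $p(u,u)=\lim_{n,m}p(x_n,x_m)=0$ and $\lim_n p(x_n,u)=0$.

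To identify $u$ as a fixed point, applying (i) at $x=y=u$ (valid by reflexivity of $\leq_X$) gives $p(fu,fu)\leq p(u,u)-\varphi(p(u,u))=0$. In the continuous alternative of (iii), $\tau_p$-continuity of $f$ yields $p(fu,x_{n+1})=p(fu,fx_n)\to p(fu,fu)=0$, so the partial-metric triangle inequality gives $p(u,fu)\leq p(u,x_{n+1})+p(x_{n+1},fu)\to 0$, and axiom (1) forces $fu=u$. Under the order alternative, the hypothesis delivers $x_n\leq_X u$ for all $n$, so applying (i) to $(u,x_n)$ and letting $n\to\infty$ gives $p(u,fu)\leq p(u,u)-\varphi(p(u,u))=0$, again $fu=u$. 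The equality $p(u,u)=0$ has already been secured.

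The main technical obstacle is the Cauchy step, where the non-vanishing self-distances that appear in the partial-metric triangle inequality must be tracked carefully; establishing $p(x_n,x_n)\to 0$ first keeps the bookkeeping clean there and also in the passage from $p^S$-convergence of $\{x_n\}$ to the concrete value $p(u,u)=0$. A secondary delicate point is disentangling $\tau_p$-convergence (which only demands $p(x,x)=\lim p(x,x_n)$) from $p^S$-convergence in the continuous case, which is why the contraction is first used on the diagonal pair $(u,u)$ to pin down $p(fu,fu)=0$ before the triangle argument is run.
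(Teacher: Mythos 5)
Your proof is correct. Note, however, that the paper itself gives no proof of this statement: Theorem \ref{NaSeTheorem1.9.} is quoted as a known result of Aydi (reference \cite{Aydi}) and serves only as background, so there is no in-paper argument to compare against. That said, your route is exactly the technique the authors themselves use for their main Theorem \ref{NaSeTheorem2.1.}: Picard iteration along the order, monotone decrease of $p(x_n,x_{n+1})$ to $0$ via the control function, a Cauchy-by-contradiction argument with a minimal index $n_k$ and the partial-metric triangle inequality, passage to the induced metric $p^S$ and Lemma \ref{NaSeLemma1.4.}, and a final case split on continuity versus the order condition. Two small remarks: first, your preliminary step $p(fu,fu)\le p(u,u)-\varphi(p(u,u))=0$ via reflexivity is a genuinely useful device for handling the merely $T_0$ topology $\tau_p$ in the continuity case, and is cleaner than what one usually sees. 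Second, in the order case the displayed line ``$p(u,fu)\le p(u,u)-\varphi(p(u,u))=0$'' is a slight compression: what the contraction applied to $(u,x_n)$ actually yields is $p(fu,x_{n+1})\le p(u,x_n)-\varphi(p(u,x_n))\to 0$, after which the triangle inequality $p(u,fu)\le p(u,x_{n+1})+p(x_{n+1},fu)$ closes the argument, exactly as you spelled out in the continuous case; you should make that intermediate step explicit. With that cosmetic fix the proof is complete, and it does not even need the unused hypotheses that $\varphi$ be non-decreasing and tend to infinity.
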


\begin{definition}[\cite{Doric}]
	\label{NaSedefinition1.10.} The control functions\ $
	\psi $\ and $\varphi $\ are defined as
	
\begin{enumerate}[label=\textup{\arabic*)}, ref=\arabic*]
		\item $\psi :[0,\infty )\rightarrow \lbrack 0,\infty )$ is a continuous
		nondecreasing function with $\psi (t)=0$ if and only if $t=0$,
		
		\item $\varphi :[0,\infty )\rightarrow \lbrack 0,\infty )$ is a lower
		semi-continuous function with $\varphi (t)=0$ if and only if $t=0.$
	\end{enumerate}
\end{definition}

A subset $W$ of a partially ordered set $X$ is said to be well ordered if
every two elements of $W$ are comparable.

Recently, Abbas et al. \cite{AT} obtained the following result in partial
metric spaces.

\begin{theorem}
	\label{NaSeTheorem1.11} Let $(X,\preceq )$ be a partially
	ordered set such that there exist a complete partial metric $p$ on $X$ and $
	f $ a nondecreasing self map on $X$. Suppose that for every two elements $
	x,y\in X$ with $y\preceq x,$ we have
	\begin{align*}
		& \psi (p(fx,fy))\leq \psi (M(x,y))-\phi (M(x,y)),  \\ 	
        & M(x,y)=\max \{p(x,y),p(fx,x),p(fy,y),\dfrac{p(x,fy)+p(y,fx)}{2}\},
	\end{align*}
	where $\psi $ and $\phi $ are control functions. If there exists $x_{0}\in X$
	with $x_{0}\preceq fx_{0}$ and one of the following two conditions is
	satisfied:
	\begin{enumerate}[label=\textup{(\roman*)}, ref=(\roman*)]
		\item $f$ is continuous self map on $(X,p^{S})$,		
		\item for any nondecreasing sequence $\{x_{n}\}$ in $(X,\preceq )$ with
		$\lim \limits_{n\rightarrow \infty }p^{S}(z,x_{n})=0$ it follows $x_{n}\preceq z$ for all $n\in
		\mathbb{Z}_{\geq 0},$
	\end{enumerate}
then $f$ has a fixed point. Moreover, the set of fixed points of $f $ is well ordered if and only if $f$ has one and only one fixed point.
\end{theorem}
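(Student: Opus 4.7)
The plan is to run the standard Picard iteration argument adapted to partial metrics. Starting from the given $x_0 \preceq fx_0$, define $x_{n+1} = fx_n$. Since $f$ is nondecreasing, the sequence is nondecreasing: $x_n \preceq x_{n+1}$ for all $n$. First I would apply the contractive inequality with $x = x_n$, $y = x_{n-1}$ and simplify $M(x_n, x_{n-1})$. The only nontrivial term is $\tfrac{p(x_n, x_n) + p(x_{n-1}, x_{n+1})}{2}$, which one bounds via axiom 4) by $\max\{p(x_{n-1}, x_n), p(x_n, x_{n+1})\}$, so $M(x_n, x_{n-1}) = \max\{p(x_{n-1}, x_n), p(x_n, x_{n+1})\}$. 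If the max equals $p(x_n, x_{n+1})$, the inequality forces $\phi(p(x_n, x_{n+1})) = 0$, hence $x_{n+1} = x_n$ and we are done. Otherwise the sequence $p(x_n, x_{n+1})$ is nonincreasing, so it converges to some $r \geq 0$; passing to the limit and using lower semicontinuity of $\phi$ yields $\phi(r) = 0$, so $r = 0$. In particular $p(x_n, x_n) \leq p(x_n, x_{n+1}) \to 0$.

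The main obstacle is the Cauchy step. I would argue by contradiction: assuming $\{x_n\}$ is not Cauchy in $(X, p^S)$, and using $p(x_n, x_n) \to 0$, this gives $\varepsilon > 0$ and subsequences $\{n_k\}, \{m_k\}$ with $m_k < n_k$, $p(x_{m_k}, x_{n_k}) \geq \varepsilon$, and $n_k$ minimal with this property. Repeated use of axiom 4) together with $p(x_n, x_{n+1}) \to 0$ and $p(x_n, x_n) \to 0$ gives
\begin{equation*}
\lim_{k} p(x_{m_k}, x_{n_k}) = \lim_{k} p(x_{m_k-1}, x_{n_k-1}) = \lim_{k} p(x_{m_k-1}, x_{n_k}) = \lim_{k} p(x_{m_k}, x_{n_k-1}) = \varepsilon.
\end{equation*}
Applying the contractive inequality with $x = x_{n_k-1}$, $y = x_{m_k-1}$ (which are comparable since the iterates are nondecreasing) and passing to the $\limsup$, the four terms in $M$ converge to $\varepsilon, 0, 0, \varepsilon$ respectively, so $M(x_{n_k-1}, x_{m_k-1}) \to \varepsilon$. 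Lower semicontinuity of $\phi$ and continuity of $\psi$ yield $\psi(\varepsilon) \leq \psi(\varepsilon) - \phi(\varepsilon)$, forcing $\varepsilon = 0$, contradiction. Hence $\{x_n\}$ is Cauchy in $(X, p^S)$, so by Lemma \ref{NaSeLemma1.4.} and completeness, $x_n \to u$ in $(X, p^S)$ for some $u \in X$ with $p(u, u) = \lim p(x_n, u) = \lim p(x_n, x_m) = 0$.

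To show $fu = u$: under (i), continuity of $f$ in $(X, p^S)$ gives $fx_n \to fu$, but $fx_n = x_{n+1} \to u$, so $fu = u$ by uniqueness of limits. Under (ii), $x_n \preceq u$ for all $n$, so apply the contractive inequality with $x = u$, $y = x_n$; using axiom 4) one shows $p(x_n, fu) \to p(u, fu)$ and $p(fu, x_{n+1}) \to p(fu, u)$, so $M(u, x_n) \to p(u, fu)$. Taking $\limsup$ in the inequality and again using lower semicontinuity of $\phi$ gives $\psi(p(fu, u)) \leq \psi(p(fu, u)) - \phi(p(fu, u))$, hence $p(fu, u) = 0$ and $fu = u$.

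For the last claim, the reverse direction is trivial. For the forward direction, suppose the fixed point set is well ordered and let $u, v$ both be fixed. WLOG $v \preceq u$; applying the contractive inequality directly, $M(u, v) = \max\{p(u,v), p(u,u), p(v,v), \tfrac{p(u,v)+p(v,u)}{2}\} = p(u, v)$ by axiom 2). Then $\psi(p(u, v)) \leq \psi(p(u, v)) - \phi(p(u, v))$ forces $p(u, v) = 0$, and combined with axiom 2) this gives $p(u, u) = p(v, v) = p(u, v) = 0$, so $u = v$ by axiom 1).
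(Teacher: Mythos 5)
Your proposal is correct; the paper itself states Theorem~\ref{NaSeTheorem1.11} without proof (quoting it from the reference of Abbas--Nazir), but your argument is exactly the specialization to one map of the technique the paper uses to prove its main result, Theorem~\ref{NaSeTheorem2.1.}: Picard iteration, monotonicity of $p(x_n,x_{n+1})$ via the estimate $\tfrac{p(x_{n-1},x_{n+1})+p(x_n,x_n)}{2}\le\max\{p(x_{n-1},x_n),p(x_n,x_{n+1})\}$, the $\varepsilon$-subsequence Cauchy argument with the modified triangle inequality, the split into the continuity case and the order-limit case, and the well-ordering argument for uniqueness. No gaps of substance; all the limit manipulations you sketch (e.g.\ $p(x_n,fu)\to p(u,fu)$ from axiom~4 and $p(u,u)=0$) go through as claimed.
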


\section{Common Fixed Point Results}

In this section, we obtain common fixed point theorems for four mappings
defined on an ordered partial metric space. We start with the following result.

\begin{theorem}
	\label{NaSeTheorem2.1.}
	Let $(X,\preceq,p)$ be an ordered complete partial metric space. Let
$f,g,S$ and $T$ be self maps on $X$, $(f,g)$ be the pair of dominated and $(S,T)$ be the pair of
		dominating maps with $f(X) \subseteq T(X)$ and
$g(X) \subseteq S(X) $. Suppose that, there
		exists control functions $\psi $ and $\varphi $ such
		that, for every two comparable elements $x,y\in X$,
	\begin{equation}
		\psi (p(fx,gy))\leq \psi (M_{p}(x,y))-\varphi (M_{p}(x,y)), \label{t.OMSeq1.1}
	\end{equation}
	is satisfied where
	\begin{equation*}
		M_{p}(x,y)=\max \{p(Sx,Ty),p(fx,Sx),p(gy,Ty),\dfrac{p(Sx,gy)+p(fx,Ty)}{2}\}.
	\end{equation*}
	If for any nonincreasing sequence $\{x_{n}\}$ in $(X,\preceq )$ with $x_{n}\preceq y_{n}$ for all $n$
	and $\lim \limits_{n\rightarrow \infty }p^{S}(x_{n},u)=0$
	it holds that $u\preceq y_{n}$ for all $n\in\mathbb{Z}_{\geq 0}$ and either of the following conditions hold:
		\begin{enumerate}[label=\textup{(\roman*)}, ref=(\roman*)]
		\item \label{condiNaSeTheorem2.1.} $\{f,S\}$ are compatible, $f$ or $S$ is continuous on $(X,p^{S})$ and $\{g,T\}$ are weakly
		compatible;
	\item \label{condiiNaSeTheorem2.1.} $\{g,T\}$ are compatible, $g$ or $T$ is continuous on $(X,p^{S})$ and $\{f,S\}$ are weakly
		compatible,
	\end{enumerate}
	 then $f$,$g$,$S$ and $T$ have a common fixed point. Moreover, the set of common fixed points of $f$,
	$g$, $S$ and $T$ is well ordered if and only if $f$, $g$, $S$ and $T$
		have one and only one\ common fixed point.
\end{theorem}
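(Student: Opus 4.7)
The plan is to follow the standard Jungck–Das scheme adapted to the partial-metric setting. Starting from any $x_0 \in X$, I would use the inclusions $f(X)\subseteq T(X)$ and $g(X)\subseteq S(X)$ to construct inductively a sequence $\{x_n\}$ with $y_{2n}=fx_{2n}=Tx_{2n+1}$ and $y_{2n+1}=gx_{2n+1}=Sx_{2n+2}$. Because $(f,g)$ is dominated and $(S,T)$ is dominating, each pair of consecutive iterates is comparable: for instance $x_{2n+1}\preceq Tx_{2n+1}=fx_{2n}\preceq x_{2n}$, and similarly $x_{2n+2}\preceq x_{2n+1}$, so $\{x_n\}$ is nonincreasing and consecutive $y_n$'s are comparable — this is what allows the contractive inequality (\ref{t.OMSeq1.1}) to be applied at each step.

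The next step is to bound $p(y_n,y_{n+1})$. Applying (\ref{t.OMSeq1.1}) to the comparable pair $(x_{2n},x_{2n+1})$ and expanding $M_p(x_{2n},x_{2n+1})$ in terms of the $y_k$'s, the term $\frac{p(Sx_{2n},gx_{2n+1})+p(fx_{2n},Tx_{2n+1})}{2}$ reduces (via axiom (4) of the partial metric and axiom (2) giving $p(y_{2n},y_{2n})\le p(y_{2n-1},y_{2n})$) to at most $\max\{p(y_{2n-1},y_{2n}),p(y_{2n},y_{2n+1})\}$. If $p(y_{2n},y_{2n+1})$ were the maximum, the inequality $\psi(p(y_{2n},y_{2n+1}))\le \psi(p(y_{2n},y_{2n+1}))-\varphi(p(y_{2n},y_{2n+1}))$ would force $\varphi=0$, hence $p(y_{2n},y_{2n+1})=0$, contradicting non-stationarity. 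Therefore $\{p(y_n,y_{n+1})\}$ is nonincreasing and, passing to the limit $r\ge 0$ and using lower semicontinuity of $\varphi$ and continuity of $\psi$, one concludes $r=0$.

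To show $\{y_n\}$ is Cauchy in $(X,p^S)$, I would argue by contradiction in the usual way: assuming there exist $\varepsilon>0$ and subsequences with $p(y_{m_k},y_{n_k})\ge \varepsilon$, I would extract the minimal such indices and use the partial-metric triangle inequality (4) together with $p(y_n,y_{n+1})\to 0$ to show $p(y_{m_k},y_{n_k})\to \varepsilon$ and similarly for the neighbouring terms. Feeding the comparable pair $(x_{m_k},x_{n_k+1})$ into (\ref{t.OMSeq1.1}) and letting $k\to\infty$ gives $\psi(\varepsilon)\le \psi(\varepsilon)-\varphi(\varepsilon)$, the required contradiction. Then Lemma \ref{NaSeLemma1.4.} yields a limit $u\in X$ with $p(u,u)=\lim p(y_n,u)=\lim p(y_n,y_m)=0$.

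For the fixed-point conclusion under hypothesis \ref{condiNaSeTheorem2.1.}, I would assume $S$ continuous (the case of $f$ continuous is symmetric). Compatibility of $(f,S)$ together with $Sx_{2n+2},fx_{2n}\to u$ in $(X,p^S)$ gives $Sfx_{2n}\to Su$ and $fSx_{2n}$ has the same limit, so $S^2x_{2n+2}\to Su$ and $fSx_{2n+2}\to Su$. Applying (\ref{t.OMSeq1.1}) to $(Sx_{2n+2},x_{2n+1})$, which are comparable by the hypothesis relating nonincreasing sequences and $u$, and passing to the limit, one forces $fu=Su$; then applying it to $(u,x_{2n+1})$ yields $fu=u$, hence $Su=u$. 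Since $fu=u\in f(X)\subseteq T(X)$, there is $v$ with $Tv=u$; applying (\ref{t.OMSeq1.1}) to $(u,v)$ shows $gv=u=Tv$, so $v$ is a coincidence point of $g,T$ and weak compatibility gives $gu=Tu=u$. The main obstacle here is making sure the partial-metric triangle inequality and the subtraction of $p(y,y)$-terms in axiom (4) are handled carefully when passing these limits, and ensuring that the "$u\preceq y_n$" order-continuity hypothesis is available exactly when needed to invoke (\ref{t.OMSeq1.1}). Case \ref{condiiNaSeTheorem2.1.} is entirely symmetric.

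Finally, for the well-ordering/uniqueness equivalence: if $u$ is the unique common fixed point then $\{u\}$ is trivially well ordered. Conversely, if $u,v$ are common fixed points and the set is well ordered, they are comparable, so (\ref{t.OMSeq1.1}) applied to $(u,v)$ gives $\psi(p(u,v))\le \psi(p(u,v))-\varphi(p(u,v))$, forcing $p(u,v)=0$ and hence $u=v$ by axioms (1)–(2) of the partial metric.
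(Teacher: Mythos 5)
Your proposal is correct and follows essentially the same route as the paper: the same alternating Jungck-type iteration $y_{2n+1}=fx_{2n}=Tx_{2n+1}$, $y_{2n+2}=gx_{2n+1}=Sx_{2n+2}$, the same monotonicity-then-Cauchy argument via the partial-metric triangle inequality, the same use of compatibility/continuity to get $Sz=z$ and $fz=z$, the same coincidence-point step through $f(X)\subseteq T(X)$ and weak compatibility, and the same well-ordering argument for uniqueness. The only (immaterial) differences are an index shift in the $y_n$'s and a slight reordering of which of $Sz=z$, $fz=z$ is derived first.
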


\begin{proof} \smartqed Let $x_{0}$ be an arbitrary point in $
	X.$ We construct sequences $\{x_{n}\}$ and $\{y_{n}\}$\ in $X$ such that $
	y_{2n+1}=fx_{2n}=Tx_{2n+1},$ and $y_{2n+2}=gx_{2n+1}=Sx_{2n+2}.$ By given
	assumptions, $x_{2n+2}\preceq Sx_{2n+2}=gx_{2n+1}\preceq x_{2n+1},$ and $
	x_{2n+1}\preceq Tx_{2n+1}=fx_{2n}\preceq x_{2n}.$ Thus, for all $n\in
	\mathbb{Z}_{\geq 0}$ we have $x_{n+1}\preceq x_{n}.$ We suppose that $p(y_{2n},y_{2n+1})>0,$
	for every $n.$ If not, then $y_{2n}=y_{2n+1},$ for some $n$. Further, since $
	x_{2n}$\ and $x_{2n+1}$ are comparable, so from (\ref{t.OMSeq1.1}), we have
	\begin{multline*}
		\psi (p(y_{2n+1},y_{2n+2}))=\psi (p(fx_{2n},gx_{2n+1}))\\
\leq \psi(M_{p}(x_{2n},x_{2n+1}))-\varphi (M_{p}(x_{2n},x_{2n+1})), 
	\end{multline*}
	where \
	\begin{eqnarray*}
		&&M_{p}(x_{2n},x_{2n+1})=\max
		\{p(Sx_{2n},Tx_{2n+1}),p(fx_{2n},Sx_{2n}),p(gx_{2n+1},Tx_{2n+1}), \\
		&& \hspace{4cm} \dfrac{p(Sx_{2n},gx_{2n+1})+p(fx_{2n},Tx_{2n+1})}{2}\} \\
		&=&\max \{p(y_{2n},y_{2n+1}),p(y_{2n+1},y_{2n}),p(y_{2n+2},y_{2n+1}), \\
		&& \hspace{3cm} \dfrac{p(y_{2n},y_{2n+2})+p(y_{2n+1},y_{2n+1})}{2}\} \\
		&=&p(y_{2n+1},y_{2n+2}).
	\end{eqnarray*}
	Hence, $\psi (p(y_{2n+1},y_{2n+2}))\leq \psi (p(y_{2n+1},y_{2n+2}))-\varphi
	(p(y_{2n+1},y_{2n+2}))$ implies that
	 $\varphi (p(y_{2n+1},y_{2n+2}))=0.$ As, $\varphi (t)=0$ if and
	only if $t=0$, it follows that $y_{2n+1}=y_{2n+2}.$ Following the similar
	arguments, we get $y_{2n+2}=y_{2n+3}$ and so on. Thus\ $y_{2n}$ is the
	common fixed point of $f,$ $g,$ $S$ and $T$ as $\{y_{n}\}$ became a constant
	sequence in $X$.
	
	 Taking $p(y_{2n},y_{2n+1})>0$ for each $n.$ Since $x_{2n}$ and $
	x_{2n+1}$ are comparable, from (\ref{t.OMSeq1.1}), we obtain that
	\begin{equation*}
		\psi (p(y_{2n+2},y_{2n+1}))=\psi (p(y_{2n+1},y_{2n+2}))=\psi
		(p(fx_{2n},gx_{2n+1}))
	\end{equation*}
	\begin{equation}
		\leq \psi (M_{p}(x_{2n},x_{2n+1}))-\varphi (M_{p}(x_{2n},x_{2n+1})),
		\label{t.OMSeq1.3}
	\end{equation}%
	where
	\begin{eqnarray*}
		&&M_{p}(x_{2n},x_{2n+1})=\max \{p(Sx_{2n},Tx_{2n+1}),p(fx_{2n},Sx_{2n}), \\
		&&\hspace{2cm} p(gx_{2n+1},Tx_{2n+1}),\dfrac{
			p(Sx_{2n},gx_{2n+1})+p(fx_{2n},Tx_{2n+1})}{2}\} \\
		&=&\max \{p(y_{2n},y_{2n+1}),p(y_{2n+1},y_{2n}),\\
            && \hspace{2cm} p(y_{2n+2},y_{2n+1}), \dfrac{p(y_{2n},y_{2n+2})+p(y_{2n+1},y_{2n+1})}{2}\} \\
		&\leq &\max \{p(y_{2n+1},y_{2n}),p(y_{2n+2},y_{2n+1}),\dfrac{
			p(y_{2n},y_{2n+1})+p(y_{2n+1},y_{2n+2})}{2}\} \\
		&=&\max \{p(y_{2n+1},y_{2n}),p(y_{2n+2},y_{2n+1})\}.
	\end{eqnarray*}
	If $\max \{p(y_{2n+1},y_{2n}),p(y_{2n+2},y_{2n+1})\}=p(y_{2n+2},y_{2n+1}),$
	then $$M_{p}(x_{2n},x_{2n+1})\leq p(y_{2n+2},y_{2n+1}).$$ But $
	M_{p}(x_{2n},x_{2n+1})\geq p(y_{2n+2},y_{2n+1}),$ and so
	\begin{equation*}
		M_{p}(x_{2n},x_{2n+1})=p(y_{2n+2},y_{2n+1}),
	\end{equation*}
	and (\ref{t.OMSeq1.3}) give
	\begin{eqnarray*}
		\psi (p(y_{2n+2},y_{2n+1})) &\leq &\psi (M_{p}(x_{2n},x_{2n+1}))-\varphi
		(M_{p}(x_{2n},x_{2n+1})) \\
		&=&\psi (p(y_{2n+2},y_{2n+1}))-\varphi (p(y_{2n+2},y_{2n+1})),
	\end{eqnarray*}
	a contradiction. Hence $p(y_{2n+2},y_{2n+1})\leq p(y_{2n+1},y_{2n})$.
	Moreover $M_{p}(x_{2n},x_{2n+1})\leq p(y_{2n},y_{2n+1}).$ But, since $
	M_{p}(x_{2n},x_{2n+1})\geq p(y_{2n},y_{2n+1}),$ it follows that
$$M_{p}(x_{2n},x_{2n+1})=p(y_{2n},y_{2n+1}).$$
Similarly, $p(y_{2n+3},y_{2n+2})\leq p(y_{2n+2},y_{2n+1}).$ Thus, the
	sequence $\{p(y_{2n+1},y_{2n})\}$ is nonincreasing. Hence, there exists $
	c\geq 0$ such that $\lim \limits_{n\rightarrow \infty }p(y_{2n+1},y_{2n})=c.$
	Suppose that $c>0.$ Then, $\psi (p(y_{2n+2},y_{2n+1}))\leq \psi
	(M_{p}(x_{2n+1},x_{2n}))-\varphi (M_{p}(x_{2n+1},y_{2n})),$ and by lower
	semicontinuity of $\varphi ,$ we have
	\begin{equation*}
		\underset{n\rightarrow \infty }{\lim \sup }\ \psi (p(y_{2n+2},y_{2n+1}))\leq
		\underset{n\rightarrow \infty }{\lim \sup }\ \psi (p(y_{2n},y_{2n+1}))-%
		\underset{n\rightarrow \infty }{\lim \inf }\ \varphi (p(y_{2n},y_{2n+1})),
	\end{equation*}
	which implies that $\psi (c)\leq \psi (c)-\varphi (c),$ a contradiction.
	Therefore $c=0.$ So we conclude that%
	\begin{equation}
		\lim_{n\rightarrow \infty }p(y_{2n+1},y_{2n})=0.  \label{t.OMSeq1.4}
	\end{equation}%
	Now, we show that $\lim \limits_{n,m\rightarrow \infty }p(y_{2n},y_{2m})=0$.
	If not, there is $\varepsilon >0,$ and there exist even integers $2n_{k}$
	and $2m_{k}$ with $2m_{k}>2n_{k}>k$ such that%
	\begin{equation}
		p(y_{2m_{k}},y_{2n_{k}})\geq \varepsilon ,  \label{t.OMSeq1.5}
	\end{equation}%
	and $p(y_{2m_{k}-2},y_{2n_{k}})<\varepsilon .$ Since
	\begin{eqnarray*}
		\varepsilon &\leq &p(y_{2m_{k}},y_{2n_{k}}) \\
		&\leq
		&p(y_{2n_{k}},y_{2m_{k}-2})+p(y_{2m_{k}-2},y_{2m_{k}})-p(y_{2m_{k}-2},y_{2m_{k}-2})
		\\
		&\leq
		&p(y_{2n_{k}},y_{2m_{k}-2})+p(y_{2m_{k}-2},y_{2m_{k}-1})+p(y_{2m_{k}-1},y_{2m_{k}})
		\\
		&&-p(y_{2m_{k}-1},y_{2m_{k}-1})-p(y_{2m_{k}-2},y_{2m_{k}-2}),
	\end{eqnarray*}
	from (\ref{t.OMSeq1.4}) and (\ref{t.OMSeq1.5}), we have
	\begin{equation}
		\lim_{k\rightarrow \infty }p(y_{2m_{k}},y_{2n_{k}})=\varepsilon.  \label{t.OMSeq1.6}
	\end{equation}
	Also (\ref{t.OMSeq1.6}) and inequality
$$p(y_{2m_{k}},y_{2n_{k}})\leq
	p(y_{2m_{k}},y_{2m_{k}-1})+p(y_{2m_{k}-1},y_{2n_{k}})-p(y_{2m_{k}-1},y_{2m_{k}-1})
	$$
give that $\varepsilon \leq \lim \limits_{k\rightarrow \infty
	}p(y_{2m_{k}-1},y_{2n_{k}}),$ while inequality
$$ p(y_{2m_{k}-1},y_{2n_{k}})\leq
	p(y_{2m_{k}-1},y_{2m_{k}})+p(y_{2m_{k}},y_{2n_{k}})-p(y_{2m_{k}},y_{2m_{k}})$$
	yields $\lim \limits_{k\rightarrow \infty }p(y_{2m_{k}-1},y_{2n_{k}})\leq
	\varepsilon,$ and hence
	\begin{equation}
		\lim_{k\rightarrow \infty }p(y_{2m_{k}-1},y_{2n_{k}})=\varepsilon.
		\label{t.OMSeq1.7}
	\end{equation}
	Now (\ref{t.OMSeq1.7}) and inequality $$p(y_{2m_{k}-1},y_{2n_{k}})\leq
	p(y_{2m_{k}-1},y_{2n_{k}+1})+p(y_{2n_{k}+1},y_{2n_{k}})-p(y_{2n_{k}+1},y_{2n_{k}+1})
	$$ give $\varepsilon \leq \lim \limits_{k\rightarrow \infty
	}p(y_{2m_{k}-1},y_{2n_{k}+1}),$ while inequality
$$p(y_{2m_{k}-1},y_{2n_{k}+1})\leq
	p(y_{2m_{k}-1},y_{2n_{k}})+p(y_{2n_{k}},y_{2n_{k}+1})-p(y_{2n_{k}},y_{2n_{k}})
	$$
yields $\lim \limits_{k\rightarrow \infty
	}p(y_{2m_{k}-1},y_{2n_{k}+1})\leq \varepsilon ,$ and so
	\begin{equation*}
		\lim_{k\rightarrow \infty }p(y_{2m_{k}-1},y_{2n_{k}+1})=\varepsilon .
	\end{equation*}
	As
	\begin{align*}
		& M_{p}(x_{2n_{k}},x_{2m_{k}-1})=\max
		\{p(Sx_{2n_{k}},Tx_{2m_{k}-1}),p(fx_{2n_{k}},Sx_{2n_{k}}), \\
		& \hspace{1cm} p(gx_{2m_{k}-1},Tx_{2m_{k}-1}),\frac{
			p(Sx_{2n_{k}},gx_{2m_{k}-1})+p(fx_{2n_{k}},Tx_{2m_{k}-1})}{2}\} \\
		&=\max \{p(y_{2n_{k}},y_{2m_{k}-1}),p(y_{2n_{k}+1},y_{2n_{k}}), \\
		& \hspace{1cm} p(y_{2m_{k}},y_{2m_{k}-1}),\frac{
			p(y_{2n_{k}},y_{2m_{k}})+p(y_{2n_{k}+1},y_{2m_{k}-1})}{2}\}.
	\end{align*}
	So, $\lim \limits_{k\rightarrow \infty }M_{p}(x_{2n_{k}},x_{2m_{k}-1})=\max
	\{ \varepsilon ,0,0,\varepsilon \}=\varepsilon .$ From (\ref{t.OMSeq1.1}), we obtain
	\begin{multline*}
		\psi (p(y_{2n_{k}+1},y_{2m_{k}}))=\psi (p(fx_{2n_{k}},gx_{2m_{k}-1})) \\
        \leq \psi (M_{p}(x_{2n_{k}},x_{2m_{k}-1}))-\varphi
		(M_{p}(x_{2n_{k}},x_{2m_{k}-1})).
	\end{multline*}
	Taking upper limit as $k\rightarrow \infty $ implies that $\psi (\varepsilon
	)\leq \psi (\varepsilon )-\varphi (\varepsilon ),$ a contradiction as $
	\varepsilon >0$. Thus, we obtain $\lim \limits_{n,m\rightarrow \infty
	}p(y_{2n},y_{2m})=0,$ and it follows that $\{y_{2n}\}$ is a Cauchy sequence
	in $(X,p),$ and hence Cauchy in $(X,p^{S})$ by Lemma \ref{NaSeLemma1.4.}. Since $(X,p)$ is
	complete, it follows from Lemma \ref{NaSeLemma1.4.}, $(X,p^{S})$ is also complete, so the
	sequence $\{y_{2n}\}$ is convergent in the metric space $(X,p^{S}).$
	Therefore, there exists a point $z$ in $X$ such that $\lim
	\limits_{n\rightarrow \infty }p^{S}(y_{2n},z)=0.$ Hence,
	\begin{align*}
	& \lim_{n\rightarrow \infty }y_{2n+1}=\lim_{n\rightarrow \infty}Tx_{2n+1}=\lim_{n\rightarrow \infty }fx_{2n}=z,
\\
	& \lim_{n\rightarrow \infty }y_{2n+2}=\lim_{n\rightarrow \infty}Sx_{2n+2}=\lim_{n\rightarrow \infty }gx_{2n+1}=z.
	\end{align*}
	Equivalently, we have $\lim \limits_{n,m\rightarrow \infty
	}p(y_{2n},y_{2m})=\lim \limits_{n\rightarrow \infty }p(y_{2n},z)=p(z,z).$
	
	Assume that $S$ is continuous on $(X,p^{S})$. Then $\lim
	\limits_{n\rightarrow \infty }SSx_{2n+2}=\lim \limits_{n\rightarrow \infty
	}Sfx_{2n+2}=Sz.$ Also, since $\{f,S\}$ are compatible, we have
	\begin{equation*}
		\lim_{n\rightarrow \infty }fSx_{2n+2}=\lim_{n\rightarrow \infty
		}Sfx_{2n+2}=Sz.
	\end{equation*}
	As, $Sx_{2n+2}=gx_{2n+1}\preceq x_{2n+1},$ so from (\ref{t.OMSeq1.1}), we have
	\begin{equation}
		\psi (p(fSx_{2n+2},gx_{2n+1}))\leq \psi \left(
		M_{p}(Sx_{2n+2},x_{2n+1})\right) -\varphi (M_{p}(Sx_{2n+2},x_{2n+1})),
		\label{t.OMSeq1.9}
	\end{equation}%
	where
	\begin{multline*}
		M_{p}(Sx_{2n+2},x_{2n+1}) =\max
		\{p(SSx_{2n+2},Tx_{2n+1}),p(fSx_{2n+2},SSx_{2n+2}), \\
		p(gx_{2n+1},Tx_{2n+1}),\frac{p(SSx_{2n+2},gx_{2n+1})+p(fSx_{2n+2},Tx_{2n+1})}{2}\}.
	\end{multline*}
	Now we show that $\lim \limits_{n\rightarrow \infty
	}p(fSx_{2n+2},gx_{2n+1})=p(Sz,z).$ Indeed,
	\begin{multline*}
p^{S}(fSx_{2n+2},gx_{2n+1})=\\ 2p(fSx_{2n+2},gx_{2n+1})-p(fSx_{2n+2},fSx_{2n+2})
-p(gx_{2n+1},gx_{2n+1}),
	\end{multline*}
	implies
	\begin{multline*}
		p(fSx_{2n+2},fSx_{2n+2})+p(gx_{2n+1},gx_{2n+1})+p^{S}(fSx_{2n+2},gx_{2n+1})\\
=2p(fSx_{2n+2},gx_{2n+1}),
	\end{multline*}
	which on taking limit as $n\rightarrow \infty ,$ implies that%
	\begin{equation*}
		p(Sz,Sz)+p(z,z)+p^{S}(Sz,z)=2\lim_{n\rightarrow \infty
		}p(fSx_{2n+2},gx_{2n+1}).
	\end{equation*}
	This further implies that%
	\begin{equation*}
		p(Sz,Sz)+p(z,z)+[2p(Sz,z)-p(Sz,Sz)-p(z,z)]=2\lim_{n\rightarrow \infty
		}p(fSx_{2n+2},gx_{2n+1}),
	\end{equation*}
	that is,%
	\begin{equation*}
		p(Sz,z)=\lim_{n\rightarrow \infty }p(fSx_{2n+2},gx_{2n+1}).
	\end{equation*}
	From (\ref{t.OMSeq1.9}), on taking upper limit as $n\rightarrow \infty ,$ we obtain
$$\psi(p(Sz,z))\leq \psi \left( p(Sz,z)\right) -\varphi (p(Sz,z)),$$ and $Sz=z.$
	
	Now, as $gx_{2n+1}\preceq x_{2n+1}$ and $gx_{2n+1}\rightarrow z$ as $
	n\rightarrow \infty ,$ it follows that $z\preceq x_{2n+1}$. Hence from
	(\ref{t.OMSeq1.1}), we have
	\begin{equation*}
		\psi (p(fz,gx_{2n+1}))\leq \psi \left( M_{p}(z,x_{2n+1})\right) -\varphi
		(M_{p}(z,x_{2n+1})),  
	\end{equation*}
	where
	\begin{align*}
		M_{p}(z,x_{2n+1}) &=\max \{p(Sz,Tx_{2n+1}),p(fz,Sz),p(gx_{2n+1},Tx_{2n+1}),
		\\
		&\hspace{2cm} \frac{p(Sz,gx_{2n+1})+p(fz,Tx_{2n+1})}{2}\} \\
		&=\max \{p(z,Tx_{2n+1}),p(fz,z),p(gx_{2n+1},Tx_{2n+1}), \\
		&\hspace{2cm} \frac{p(z,gx_{2n+1})+p(fz,Tx_{2n+1})}{2}\}.
	\end{align*}
	On taking upper limit as $n\rightarrow \infty ,$ we have $\psi (p(fz,z))\leq
	\psi \left( p(fz,z)\right) -\varphi (p(fz,z)),$ and $fz=z.$
	
	Since $f(X)\subseteq T(X),$ there exists a point $w\in X$ such that $fz=Tw.$
	Suppose that $gw\neq Tw.$ Since $w\preceq Tw=fz\preceq z\ $implies $w\preceq
	z.$ From (\ref{t.OMSeq1.1}), we obtain
	\begin{equation}
		\psi (p(Tw,gw))=\psi (p(fz,gw))\leq \psi \left( M_{p}(z,w)\right) -\varphi
		(M_{p}(z,w)),  \label{t.OMSeq1.11}
	\end{equation}
	\begin{eqnarray*}
		\text{where }M_{p}(z,w) &=&\max \{p(Sz,Tw),p(fz,Sz),p(gw,Tw),\frac{
			p(Sz,gw)+p(fz,Tw)}{2}\} \\
		&=&\max \{p(z,z),p(z,z),p(gw,Tw),\frac{p(Tw,gw)+p(Tw,Tw)}{2}\} \\
		&=&p(Tw,gw).
	\end{eqnarray*}
	Now (\ref{t.OMSeq1.11}) becomes $\psi (p(Tw,gw))\leq \psi \left( p(Tw,gw)\right) -\varphi
	(p(Tw,gw)),$ a contradiction. Hence, $Tw=gw.$ Since $g$ and $T$ are weakly
	compatible, $gz=gfz=gTw=Tgw=Tfz=Tz.$ Thus $z$ is a coincidence point of $g$
	and $T.$
	
	Now, $fx_{2n}\preceq x_{2n}$ and $x_{2n}\rightarrow z$ as $n\rightarrow
	\infty ,$ imply that $z\preceq fx_{2n}.$ Hence from (\ref{t.OMSeq1.1}), we get $\psi
	(p(fx_{2n},gz))\leq \psi \left( M_{p}(x_{2n},z)\right) -\varphi
	(M_{p}(x_{2n},z)),$ where
	\begin{eqnarray*}
		M_{p}(x_{2n},z) &=&\max \{p(Sx_{2n},Tz),p(fx_{2n},Sx_{2n}),p(gz,Tz),\frac{
			p(Sx_{2n},gz)+p(fx_{2n},Tz)}{2}\} \\
		&=&\max \{p(Sx_{2n},gz),p(fx_{2n},Sx_{2n}),p(gz,gz),\frac{
			p(Sx_{2n},gz)+p(fx_{2n},gz)}{2}\} \\
		&=&p(z,gz)\text{ as }n\rightarrow \infty .
	\end{eqnarray*}
	On taking upper limit as $n\rightarrow \infty ,$ we have $\psi (p(z,gz))\leq
	\psi \left( p(z,gz)\right) -\varphi (p(z,gz)),$ and $z=gz.$ Therefore $
	fz=gz=Sz=Tz=z.$ The proof is similar when $f$ is continuous.
	
	 Similarly, the result follows when \ref{condiiNaSeTheorem2.1.} holds.
	
	 Now, suppose that the set of common fixed points of $f$, $g$, $S$ and $T$ is
	well ordered. We are to show that the common fixed point of $f$, $g$, $S$
	and $T$ is unique. Suppose that $u$ and $v$ be two fixed points of $f,$ $g,$
	$S$\ and $T$ i.e., $fu=gu=Su=Tu=u$ and $fv=gv=Sv=Tv=v$ with $u\neq v.$ Then
	from (\ref{t.OMSeq1.1}), we have
	\begin{equation*}
		\psi (p(u,v))=\psi (p(fu,gv))\leq \psi (M_{p}(u,v))-\varphi (M_{p}(u,v)),
	\end{equation*}
	where
	\begin{eqnarray*}
		\text{ }M_{p}(u,v) &=&\max \{p(Su,Tv),p(fu,Su),p(gv,Tv),\dfrac{
			p(Su,gv)+p(fu,Tv)}{2}\} \\
		&=&\max \{p(u,v),p(u,u),p(v,v),\dfrac{p(u,v)+p(u,v)}{2}\} \\
		&=&p(u,v).
	\end{eqnarray*}
	Thus $\psi (p(u,v))\leq \psi (p(u,v))-\varphi (p(u,v)),$ a contradiction.
	Hence $u=v$. Conversely, if $f,$ $g,$\ $S$\ and $T$ have only one common
	fixed point then the set of common fixed point of $f,$ $g,$ $S\ $and $T$ is
	well ordered being singleton.
\qed \end{proof}

\begin{example}
	\label{NaSeExample2.2} Let $X=[0,k]$ for a real
	number $k\geq 9/10$ endowed with usual order $\leq .$ Let $p:X\times
	X\rightarrow \mathbb{R}_{\geq 0}$ be defined by $p(x,y)=\left \vert x-y\right
	\vert $ if $x,y\in \lbrack 0,1),\,$and $p(x,y)=\max \{x,y\}$ otherwise. It
	is easily seen that $(X,p)$ is a complete partial metric space \cite{AT}.
	Consider $\psi (t)=\left \{
	\begin{array}{l}
		3t,\text{ if }0\leq t\leq \frac{1}{3} \\
		1,\text{ \ if }x\in (\frac{1}{3},1] \\
		t,\text{ \ otherwise}%
	\end{array}%
	\right. $\ and $\varphi (t)=\left \{
	\begin{array}{l}
		0,\text{ \ if }t=0 \\
		\frac{t}{3},\text{ \ if }0<t\leq \frac{1}{3} \\
		\frac{1}{9},\text{ \ otherwise}%
	\end{array}%
	\right. .$ Define the self mappings $f$, $g$, $S$ and $T$ on $X$ by
	\begin{equation*}
		\begin{array}{ll}
			f(x)=\left \{
			\begin{array}{l}
				\frac{1}{6}x,\text{ if }x\leq \frac{1}{3} \\
				\frac{1}{18},\text{ if }x\in (\frac{1}{3},k]%
			\end{array}%
			\right. , & \text{ }gx=\left \{
			\begin{array}{l}
				0,\text{ if }x\leq \frac{1}{3} \\
				\frac{1}{3},\text{ if }x\in (\frac{1}{3},k]%
			\end{array}%
			\right. , \\
			T(x)=\left \{
			\begin{array}{l}
				0,\text{ \ if }x=0 \\
				x,\text{ \ if }x\in (0,\frac{1}{3}] \\
				k,\text{ \ if }x\in (\frac{1}{3},k]%
			\end{array}%
			\right. , & \text{ }Sx=\left \{
			\begin{array}{l}
				0,\text{ if }x=0 \\
				\frac{1}{3},\text{ if }x\in (0,\frac{1}{3}] \\
				k,\text{ if }x\in (\frac{1}{3},k]%
			\end{array}%
			\right. .%
		\end{array}%
	\end{equation*}
	Then $f(X)\subseteq T(X)$\ and $g(X)\subseteq S(X)$ with $f$ and $g$ are
	dominated and $S$ and $T$ are dominating mappings as%
	\begin{equation*}
		\begin{tabular}{|l|l|l|l|l|}
			\hline
			$\text{for each }x\text{ in }X$ & $fx\leq x$ & $gx\leq x$ & $x\leq Sx$ & $
			x\leq Tx$ \\ \hline
			$x=0$ & $f\left( 0\right) =0$ & $g\left( 0\right) =0$ & $0=S(0)$ & $0=T(0)$
			\\ \hline
			$x\in (0,\frac{1}{3}]$ & $fx=\frac{1}{6}x\leq x$ & $gx=0<x$ & $x\leq \frac{1%
			}{3}=S(x)$ & $x=T(x)$ \\ \hline
			$x\in (\frac{1}{3},k]$ & $fx=\frac{1}{18}<x$ & $gx=\frac{1}{3}<x$ & $x\leq
			k=S(x)$ & $x\leq k=T(x)$ \\ \hline
		\end{tabular}%
	\end{equation*}
	Also note that $\{f,S\}$\ are compatible, $\{g,T\}$\ are weakly compatible
	with $f$ is a continuous map.
	
 To show that $f,$ $g,$ $S$ and $T$ satisfy (\ref{t.OMSeq1.1}) for all $x,y\in X,$ we
	consider the following cases:	
	\begin{enumerate} [label=\textup{(\roman*)}, ref=(\roman*)]
		\item If $x=0\ $and $y\in \lbrack 0,\frac{1}{3}],$ then $p(fx,gy)=0$
		and (\ref{t.OMSeq1.1}) is satisfied.
		
		\item For $x=0\ $and $y\in (\frac{1}{3},k],$ we have
		\begin{eqnarray*}
			\psi (p(fx,gy)) &=&\psi (p(0,\frac{1}{3}))=\psi (\frac{1}{3}) \\
			&=&1<k-\frac{1}{9}=\psi (k)-\varphi (k) \\
			&=&\psi (p(0,k))-\varphi (p(0,k)) \\
			&=&\psi (p(Sx,Ty))-\varphi (p(Sx,Ty)) \\
			&=&\psi (M_{p}(x,y))-\varphi (M_{p}(x,y)).
		\end{eqnarray*}
		
		\item When $x=(0,\frac{1}{3}]\ $and $y\in \lbrack 0,\frac{1}{3}],$
		then
		\begin{eqnarray*}
			\psi (p(fx,gy)) &=&\psi (p(\frac{1}{6}x,0))=\psi (\frac{1}{6}x)=\frac{1}{2}x
			\\
			&\leq &3\max \{(\frac{1}{3}-\frac{1}{6}x),y\}-\frac{1}{3}\max \{(\frac{1}{3}-%
			\frac{1}{6}x),y\} \\
			&=&\psi (\max \{(\frac{1}{3}-\frac{1}{6}x),y\})-\varphi (\max \{(\frac{1}{3}-%
			\frac{1}{6}x),y\}) \\
			&=&\psi (\max \{p(fx,Sx),p(gy,Ty)\})-\varphi (\max \{p(fx,Sx),p(gy,Ty)\}) \\
			&=&\psi (M_{p}(x,y))-\varphi (M_{p}(x,y)).
		\end{eqnarray*}
		
		\item If $x=(0,\frac{1}{3}]\ $and $y\in (\frac{1}{3},k],$ then%
		\begin{eqnarray*}
			\psi (p(fx,gy)) &=&\psi (p(\frac{1}{6}x,\frac{1}{3}))=\psi (\frac{1}{3}(1-%
			\frac{x}{2}))=1-\frac{1}{2}x \\
			&<&k-\frac{1}{9}=\psi (\max \{ \frac{1}{3},k\})-\varphi (\max \{ \frac{1}{3}%
			,k\}) \\
			&=&\psi (p(gy,Ty))-\varphi (p(gy,Ty)) \\
			&=&\psi (M_{p}(x,y))-\varphi (M_{p}(x,y)).
		\end{eqnarray*}
		
		\item For $x\in (\frac{1}{3},k]\ $and $y\in \lbrack 0,\frac{1}{3}],$ we
		obtain%
		\begin{eqnarray*}
			\psi (p(fx,gy)) &=&\psi (p(\frac{1}{18},0))=\psi (\frac{1}{18})=\frac{1}{6}%
			<k-\frac{1}{9} \\
			&=&\psi (\max \{ \frac{1}{18},k\})-\varphi (\max \{ \frac{1}{18},k\}) \\
			&=&\psi (p(fx,Sx))-\varphi (p(fx,Sx)) \\
			&=&\psi (M_{p}(x,y))-\varphi (M_{p}(x,y)).
		\end{eqnarray*}
		
		\item Finally, when $x,y\in (\frac{1}{3},k],$ then we have
		\begin{eqnarray*}
			\psi (p(fx,gy)) &=&\psi (p(\frac{1}{18},\frac{1}{3}))=\psi (\frac{5}{18})=%
			\frac{5}{6}<k-\frac{1}{9} \\
			&=&\psi (\max \{ \frac{1}{3},k\})-\varphi (\max \{ \frac{1}{3},k\}) \\
			&=&\psi (p(gy,Ty))-\varphi (p(gy,Ty)) \\
			&=&\psi (M_{p}(x,y))-\varphi (M_{p}(x,y)).
		\end{eqnarray*}
	\end{enumerate}
	
	 The mappings $f,g,S$\ and $T$\ satisfy (\ref{t.OMSeq1.1}). Thus all the conditions given
	in Theorem \ref{NaSeTheorem2.1.} are satisfied. Moreover, \ $0$\ is the unique common fixed
	point of\ $f,$\ $g,$\ $S$\ and $T$. \end{example}

\begin{corollary}
	\label{NaSeCorollary2.3} Let $(X,$$\preceq ,p)$\textit{\ be an ordered complete partial metric space. Let
	}$f,g,S$\textit{\ and }$T$\textit{\ be self maps on }$X$\textit{, }$(f,g)$
	\textit{\ be the pair of dominated and }$(S,T)$\textit{\ be the pair of
		dominating maps with }$f\left( X\right) \subseteq T\left( X\right) $ and $
	g\left( X\right) \subseteq S\left( X\right) $\textit{. Suppose that, there
		exists control functions }$\psi $\textit{\ and }$\varphi $\textit{\ such
		that for every two comparable elements }$x,y\in X,$
	\begin{equation*}
		\psi (p(fx,gy))\leq \psi (p(Sx,Ty))-\varphi (p(Sx,Ty)) 
	\end{equation*}
	is satisfied.

If for any nonincreasing sequence $\{x_{n}\}$ in $(X,\preceq )$ with $x_{n}\preceq y_{n}$ for all $n$
	and $\lim \limits_{n\rightarrow \infty }p^{S}(x_{n},u)=0$  it holds that $u\preceq y_{n}$ for all $n\in\mathbb{Z}_{\geq 0}$, and either of the following conditions hold:
\begin{enumerate}[label=\textup{\arabic*)}, ref=\arabic*)]
		\item \label{condiNaSeCorollary2.3}
 $\{f,S\}$ are compatible, $f$ or $S$ is continuous on $(X,p^{S})$ and $\{g,T\}$ are weakly
compatible,
\item \label{condiiNaSeCorollary2.3} $\{g,T\}$ are compatible, $g$ or $T$ is continuous on $(X,p^{S})$ and $\{f,S\}$ are weakly
		compatible,
\end{enumerate}
then $f,g,S$ and $T$ have a common fixed point. Moreover, the set of common fixed points of $f$,
	$g$, $S$ and $T$ is well ordered if and only if $f$, $g$, $S$ and $T$
		have one and only one common fixed point.
\end{corollary}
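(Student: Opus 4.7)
The plan is to run the proof of Theorem \ref{NaSeTheorem2.1.} essentially verbatim, with the quantity $M_{p}(x,y)$ replaced by $p(Sx,Ty)$ throughout. One cannot invoke Theorem \ref{NaSeTheorem2.1.} as a black box: since $p(Sx,Ty)\leq M_{p}(x,y)$ but $\varphi$ is only lower semi-continuous (not monotone), the contractive hypothesis of the corollary neither implies nor is implied by (\ref{t.OMSeq1.1}) in general, so the argument must be rerun. The payoff is that several max-based estimates in the proof of Theorem \ref{NaSeTheorem2.1.} collapse to one-line computations.

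First I would construct the iterative sequences $\{x_n\}$ and $\{y_n\}$ exactly as in Theorem \ref{NaSeTheorem2.1.}, with $y_{2n+1}=fx_{2n}=Tx_{2n+1}$ and $y_{2n+2}=gx_{2n+1}=Sx_{2n+2}$; the dominated/dominating hypotheses give $x_{n+1}\preceq x_{n}$ and comparability of consecutive iterates. Applying the corollary's contractive condition to these comparable pairs yields, at each step, the clean inequality
\[
\psi(p(y_{2n+1},y_{2n+2}))\leq \psi(p(y_{2n},y_{2n+1}))-\varphi(p(y_{2n},y_{2n+1})),
\]
since $p(Sx_{2n},Tx_{2n+1})=p(y_{2n},y_{2n+1})$. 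Monotonicity of $\psi$ and positivity of $\varphi$ on $(0,\infty)$ at once make $\{p(y_n,y_{n+1})\}$ nonincreasing and force its limit to be $0$.

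Next I would replicate the $\varepsilon$–subsequence Cauchy argument verbatim from Theorem \ref{NaSeTheorem2.1.}: assume $\{y_{2n}\}$ is not Cauchy to obtain indices $2m_k>2n_k>k$ with $p(y_{2m_k},y_{2n_k})\geq\varepsilon$, and use axiom (4) of the partial metric together with (\ref{t.OMSeq1.4}) to show $p(y_{2n_k+1},y_{2m_k-1})\to\varepsilon$. Here $p(Sx_{2n_k},Tx_{2m_k-1})=p(y_{2n_k},y_{2m_k-1})\to\varepsilon$ replaces the role of $M_p$ and yields $\psi(\varepsilon)\leq\psi(\varepsilon)-\varphi(\varepsilon)$, contradicting $\varepsilon>0$. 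So $\{y_n\}$ is Cauchy in $(X,p)$, hence in $(X,p^S)$ by Lemma \ref{NaSeLemma1.4.}, and converges to some $z\in X$ with $p(z,z)=0$.

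Finally I would identify $z$ as a common fixed point. Under condition \ref{condiNaSeCorollary2.3}, continuity of $S$ (or $f$) together with compatibility of $\{f,S\}$ gives $\lim_n fSx_{2n+2}=\lim_n Sfx_{2n+2}=Sz$; the contractive condition applied to the comparable pair $(Sx_{2n+2},x_{2n+1})$ passes to the limit and forces $Sz=z$, and then a similar passage on the pair $(z,x_{2n+1})$ gives $fz=z$. The inclusion $f(X)\subseteq T(X)$ produces $w$ with $fz=Tw$; one more application of the condition on the pair $(z,w)$ gives $gw=Tw$, and weak compatibility of $\{g,T\}$ then yields $gz=Tz=z$. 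Case \ref{condiiNaSeCorollary2.3} is symmetric. Uniqueness under the well-ordering assumption is verbatim from the corresponding passage of Theorem \ref{NaSeTheorem2.1.}. The only real point of care, and the step most likely to trip up a quick reading, is verifying in each passage to the limit that the simpler quantity $p(Sx,Ty)$ still has the correct limit—this is routine because $S$ and $T$ send the relevant iterates to the desired points.
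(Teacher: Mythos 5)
Your proposal is correct, and it departs from the paper in an instructive way: the paper states this result as a corollary of Theorem \ref{NaSeTheorem2.1.} and supplies no argument, the implicit reasoning being that $p(Sx,Ty)\leq M_{p}(x,y)$ makes the present hypothesis a special case of (\ref{t.OMSeq1.1}). You are right to distrust that one-liner: monotonicity of $\psi$ gives $\psi (p(Sx,Ty))\leq \psi (M_{p}(x,y))$, but $\varphi$ is only lower semi-continuous, so the needed inequality $\psi (p(Sx,Ty))-\varphi (p(Sx,Ty))\leq \psi (M_{p}(x,y))-\varphi (M_{p}(x,y))$ can fail (it would hold if $\psi -\varphi$ were nondecreasing, an assumption the paper does not make), and re-running the proof of Theorem \ref{NaSeTheorem2.1.} with $p(Sx,Ty)$ in place of $M_{p}(x,y)$ is the honest route. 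Your outline of that rerun is sound, because at every application of the contractive condition in the original proof the quantity $p(Sx,Ty)$ evaluates to, or converges to, exactly the term that drives the estimate: $p(y_{2n},y_{2n+1})$ in the monotonicity step (where one still needs the small standard argument that $\psi (a)\leq \psi (b)-\varphi (b)$ with $\psi$ merely nondecreasing forces $a\leq b$), $p(y_{2n_{k}},y_{2m_{k}-1})\rightarrow \varepsilon$ in the Cauchy step, $p(Sz,z)$ in the first limit passage, $p(Sz,Tx_{2n+1})\rightarrow p(z,z)=0$ and $p(Sz,Tw)=0$ in the next two (which actually make $fz=z$ and $Tw=gw$ immediate from $\psi (\cdot )\leq \psi (0)-\varphi (0)=0$), $p(Sx_{2n},Tz)\rightarrow p(z,gz)$ in the last, and $p(u,v)$ in the uniqueness step. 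What the paper's implicit approach buys is brevity at the cost of a gap; what yours buys is an actual proof, at the cost of repeating the whole machine.
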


Consistent with the terminology in \cite{Radenovic}, we denote $\Upsilon $
the set of all functions $\phi :
\mathbb{R}
_{\geq 0}\rightarrow
\mathbb{R}_{\geq 0},$ where $\phi $ is a Lebesgue integrable mapping with finite integral
on each compact subset of $
\mathbb{R}_{\geq 0},$ nonnegative, and for each $\varepsilon >0,$ $\int_{0}^{\varepsilon
}\phi (t)dt>0$ (see also, \cite{Branciari})$.$ As a consequence of Theorem
\ref{NaSeTheorem2.1.}, we obtain following fixed point result for a mapping satisfying contractive conditions of integral type in a complete partial metric space $X.$

\begin{corollary}
	\label{NaSeCorollary2.4.} \  \  Let $(X,\preceq
	, p)$ be an ordered complete partial metric space. Let $f,g,S$ and $T$ be self maps on $X$, $(f,g)$
	be the pair of dominated and $(S,T)$ be the pair of
		dominating maps with $f\left(X\right) \subseteq T\left( X\right)$ and
$g\left( X\right) \subseteq S\left( X\right)$. Suppose that, there
		exists control functions $\psi $ and $\varphi $ such
		that for every two comparable elements $x,y\in X,$
	\begin{equation}
		\int_{0}^{\psi (p(fx,gy))}\phi (t)dt\leq \int\limits_{0}^{\psi (M_{p}(x,y))}\phi
		(t)dt-\int\limits_{0}^{\varphi (M_{p}(x,y))}\phi (t)dt,  \label{t.OMSeq1.14}
	\end{equation}
	is satisfied, where $\phi \in \Upsilon $ and
	\begin{equation*}
		M_{p}(x,y)=\max \{p(Sx,Ty),p(fx,Sx),p(gy,Ty),\dfrac{p(Sx,gy)+p(fx,Ty)}{2}\}.
	\end{equation*}
	
If for any nonincreasing sequence $\{x_{n}\}$ in $
	(X,\preceq )$ with $x_{n}\preceq y_{n}$ for all $n$
	and $\lim \limits_{n\rightarrow \infty }p^{S}(x_{n},u)=0$, it holds that $u\preceq y_{n}$\textit{\ for all }$n\in
	\mathbb{Z}_{\geq 0}$, and either of the following conditions hold:
\begin{enumerate}[label=\textup{\arabic*)}, ref=\arabic*)]
		\item \label{condiNaSeCorollary2.4.}
 $\{f,S\}$\textit{\ are compatible, }$f$\textit{\ or }$S$ is continuous on $(X,p^{S})$ and $\{g,T\}$ are weakly compatible
\item \label{condiiNaSeCorollary2.4.} $\{g,T\}$ are compatible, $g$ or $T$ is continuous on $(X,p^{S})$ and $\{f,S\}$ are weakly
		compatible,
\end{enumerate}
then $f,g,S$ and $T$ have a common fixed point. Moreover, the set of common fixed points of $f,$
$g$, $S$ and $T$ is well ordered if and only if $f$, $g$, $S$ and $T$ have one and only one common fixed point.
	
\end{corollary}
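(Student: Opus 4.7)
The natural plan is to reduce this corollary directly to Theorem~\ref{NaSeTheorem2.1.} by absorbing the integral into a new pair of control functions, so that the integral-type contraction~(\ref{t.OMSeq1.14}) becomes a plain $(\psi,\varphi)$-contraction of the form~(\ref{t.OMSeq1.1}). Define
\begin{equation*}
\Psi(t)=\int_{0}^{\psi(t)}\phi(s)\,ds,\qquad \Phi(t)=\int_{0}^{\varphi(t)}\phi(s)\,ds,
\end{equation*}
for $t\ge 0$. With these definitions, inequality~(\ref{t.OMSeq1.14}) reads exactly
\begin{equation*}
\Psi(p(fx,gy))\le \Psi(M_{p}(x,y))-\Phi(M_{p}(x,y)),
\end{equation*}
for every comparable pair $x,y\in X$. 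So if I can show that $(\Psi,\Phi)$ is a legitimate pair of control functions in the sense of Definition~\ref{NaSedefinition1.10.}, the conclusion follows immediately from Theorem~\ref{NaSeTheorem2.1.}, since all the other hypotheses (compatibility, dominance, inclusions of ranges, ordering condition) carry over unchanged.

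The first main step is therefore to verify the four analytic properties required. For $\Psi$: the map $F(u)=\int_{0}^{u}\phi(s)\,ds$ is absolutely continuous and nondecreasing because $\phi\in\Upsilon$ is nonnegative and locally integrable, and $\psi$ is continuous and nondecreasing, so $\Psi=F\circ\psi$ inherits continuity and monotonicity. Also $\Psi(0)=F(\psi(0))=F(0)=0$, and if $\Psi(t)=0$ then $\int_{0}^{\psi(t)}\phi(s)\,ds=0$, which by the assumption $\int_{0}^{\varepsilon}\phi(s)\,ds>0$ for every $\varepsilon>0$ forces $\psi(t)=0$, hence $t=0$. For $\Phi$: again $\Phi(0)=0$ and $\Phi(t)=0\iff\varphi(t)=0\iff t=0$ by the same positivity of the integral. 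The only subtle point is lower semi-continuity: given $t_{n}\to t$, lower semi-continuity of $\varphi$ gives $\liminf_{n}\varphi(t_{n})\ge\varphi(t)$, and since $F$ is continuous and nondecreasing,
\begin{equation*}
\liminf_{n\to\infty}\Phi(t_{n})=\liminf_{n\to\infty}F(\varphi(t_{n}))\ge F\bigl(\liminf_{n\to\infty}\varphi(t_{n})\bigr)\ge F(\varphi(t))=\Phi(t),
\end{equation*}
so $\Phi$ is lower semi-continuous.

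With $(\Psi,\Phi)$ satisfying Definition~\ref{NaSedefinition1.10.}, Theorem~\ref{NaSeTheorem2.1.} applied under hypothesis \ref{condiNaSeCorollary2.4.} or \ref{condiiNaSeCorollary2.4.} yields a common fixed point of $f,g,S,T$, and the well-ordered-versus-uniqueness dichotomy is inherited verbatim. The main obstacle, such as it is, lies precisely in the lower semi-continuity argument for $\Phi$: one must resist the temptation to quote continuity of $F$ alone, and instead combine it with the monotonicity of $F$ and the $\liminf$ inequality for $\varphi$. Everything else is a clean transcription of the hypotheses of Corollary~\ref{NaSeCorollary2.4.} into the hypotheses of Theorem~\ref{NaSeTheorem2.1.}.
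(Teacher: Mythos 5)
Your proposal is correct and follows essentially the same route as the paper: both define $F(u)=\int_{0}^{u}\phi(s)\,ds$ and replace $(\psi,\varphi)$ by the composed pair $(F\circ\psi,\;F\circ\varphi)$, reducing (\ref{t.OMSeq1.14}) to the hypothesis of Theorem~\ref{NaSeTheorem2.1.}. The only difference is that you verify the control-function properties (in particular the lower semi-continuity of $F\circ\varphi$) explicitly, whereas the paper simply asserts them; your verification is sound.
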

\begin{proof} \smartqed Define $\Psi :\mathbb{R}
	_{\geq 0}\rightarrow \mathbb{R}
	_{\geq 0}$ by $\Psi (x)=\int\limits_{0}^{x}\phi (t)dt,$ then from (\ref{t.OMSeq1.14}), we have
	\begin{equation*}
		\Psi \left( \psi (p(fx,gy))\right) \leq \Psi \left( \psi (M_{p}(x,y))\right)
		-\Psi \left( \varphi (M_{p}(x,y))\right) ,
	\end{equation*}
	which can be written as
	\begin{equation*}
		\psi _{1}(p(fx,gy))\leq \psi _{1}(M_{p}(x,y))-\varphi _{1}(M_{p}(x,y)),
	\end{equation*}
	where $\psi _{1}=\Psi \circ \psi $ and $\varphi _{1}=\Psi \circ \varphi $.
	Clearly, $\psi _{1},\varphi _{1}:\mathbb{R}_{\geq 0}\rightarrow \mathbb{R}_{\geq 0},$ $
	\psi _{1}$ is continuous and nondecreasing, $\varphi _{1}$ is a lower
	semicontinuous, and $\psi _{1}(t)=\varphi _{1}(t)=0$ if and only if $t=0.$
	Hence by Theorem \ref{NaSeTheorem2.1.}, $f,g,S$ and $T$ have a unique common fixed point.
\qed \end{proof}

\begin{remark} We have the following remarks.
	\begin{enumerate}[label=\textup{\arabic*)}, ref=\arabic*]
		\item If we take $f=g$ and $S=T=I$ (an identity map) in Corollary \ref{NaSeCorollary2.3},
		then it extends \cite[Theorem 2.1]{Dutta} to ordered partial metric spaces.
		\item We can not apply Corollary \ref{NaSeCorollary2.3} in the setup of ordered metric
		space to the mappings given in Example \ref{NaSeExample2.2}. Indeed, if we take $x,y\in (%
		\frac{1}{3},2]$ contractive condition in the Corollary \ref{NaSeCorollary2.3} in the setup of
		ordered metric space is not satisfied.
		\item Theorem \ref{NaSeTheorem2.1.} generalizes \cite[Theorem 2.1]{AT},
		\cite[Theorem 2.1]{ATR} and \cite[Theorem 2.1]{Doric} for four maps in the setup of
		ordered partial metric spaces.
	\end{enumerate}
\end{remark}

\section{Application for Solutions of Implicit Integral Equations}

 Let $\Omega =[0,1]$ be bounded open set in $\mathbb{R}$, and $
L^{2}(\Omega )$ be the set of comparable functions on $\Omega $ whose square
is integrable on $\Omega .$\ Consider an integral equation%
\begin{equation}
	F(t,x(t))=\int \limits_{\Omega }\kappa (t,s,x(s))ds  \label{t.OMSeq1.16}
\end{equation}%
where $F:\Omega \times \mathbb{R}_{\geq 0}\rightarrow \mathbb{R}_{\geq 0}$ and $\kappa
:\Omega \times \Omega \times \mathbb{R}_{\geq 0}\rightarrow \mathbb{R}_{\geq 0}$ be two mappings. Feckan \cite{Fec} obtained the nonnegative solutions of
implicit integral equation (\ref{t.OMSeq1.16}) as an application of fixed point theorem.
We shall study the sufficient condition for existence of solution of
integral equation in framework of ordered complete partial metric space.
Define $p:X\times X\rightarrow
\mathbb{R}_{\geq 0}$ by
\begin{equation*}
	p(x,y)=\max \left( \underset{t\in \Omega }{\sup }\ x(t),\underset{t\in \Omega }{\sup }\ y(t)\right) .
\end{equation*}
Then $\left( X,p\right) $ is a complete partial metric space. We assume the following that there exists a positive
number $h\in \lbrack 0,\frac{1}{4})$:
\begin{enumerate}[label=\textup{(\roman*)}, ref=(\roman*)]	
\item $F(s,u(t))\leq h u(t)$ for each $s,t\in \Omega .$
\item $\int \limits_{\Omega }\kappa (t,s,v(s))ds\leq 2h v(t)\ $for
	each $s,t\in \Omega $.
\item The control functions\ $\psi $\ and $\varphi $\ are connected
	with relation that
	\begin{equation*}
		\psi (a)+\phi (2a)\leq \psi (2a),
	\end{equation*}
	for every $a\in
	\mathbb{R}_{\geq 0}$.
\end{enumerate}	
 Then integral equation (\ref{t.OMSeq1.16}) has a solution in $L^{2}(\Omega )$.

\begin{proof} \smartqed Define $(fx)(t)=F(t,x(t))$ and $
	(gx)(t)=\int \limits_{\Omega }\kappa (t,s,x(s))ds.$
Now
	\begin{eqnarray*}
		\psi (p(fx,gy)) &=&\psi \left( \max \left( \underset{t\in \Omega }{\sup }\
		\left( fx\right) (t),\underset{t\in \Omega }{\sup }\ \left( fy\right)
		(t)\right) \right) \\
		&=&\psi \left( \max \left( \underset{t\in \Omega }{\sup }\ F(t,x(t)),\underset{
			t\in \Omega }{\sup }\ \int \limits_{\Omega }\kappa (t,s,y(t))dt\right) \right)
		\\
		&\leq &\psi \left( \max \left( \underset{t\in \Omega }{\sup }\ hx(t),\underset{
			t\in \Omega }{\sup }\ 2hy(t)\right) \right) \\
		&\leq &\psi \left( 2h\max \left( \underset{t\in \Omega }{\sup }\ x(t),\underset%
		{t\in \Omega }{\sup }\ y(t)\right) \right) \\
		&\leq &\psi \left( \frac{1}{2}\max \left( \underset{t\in \Omega }{\sup }\ x(t),%
		\underset{t\in \Omega }{\sup }\ y(t)\right) \right) \\
		&=&\psi \left( \max \left( \underset{t\in \Omega }{\sup }\ x(t),\underset{t\in
			\Omega }{\sup }\ y(t)\right) \right) -\phi \left( \max \left( \underset{t\in
			\Omega }{\sup }\ x(t),\underset{t\in \Omega }{\sup }\ y(t)\right) \right) \\
		&=&\psi \left( p(x,y)\right) -\phi \left( p(x,y)\right) \\
		&=&\psi (M_{p}(x,y))-\varphi (M_{p}(x,y)).
	\end{eqnarray*}
	Thus for every comparable elements $x,y\in X,$
	\begin{equation*}
		\psi (p(fx,gy))\leq \psi (M_{p}(x,y))-\varphi (M_{p}(x,y)),
	\end{equation*}
	is satisfies where
	\begin{equation*}
		M_{p}(x,y)=\max \{p(x,y),p(fx,x),p(gy,y),\dfrac{p(fx,y)+p(gy,x)}{2}\}.
	\end{equation*}
	Now we can apply Theorem \ref{NaSeTheorem2.1.} by taking $S$ and $T$\ as identity maps to
	obtain the solution of integral equation (\ref{t.OMSeq1.16}) in $L^{2}(\Omega ).$
\qed \end{proof}

\section{Fractals in Partial Metric Spaces}

Consistent with \cite{AY}, let $CB^{p}(X)$ be the family of all non-empty,
closed and bounded subsets of the partial metric space $(X,p)$, induced by
the partial metric $p$. Note that closedness is taken from $(X,\tau _{p})$ ($
\tau _{p}$ is the topology induced by $p$) and boundedness is given as
follows: $A$ is a bounded subset in $(X,p)$ if there exists an $x_{0}\in X$
and $M\geq 0$ such that for all $a\in A,$ we have $a\in B_{p}(x_{0},M)$,
that is, $p(x_{0},a)<p(a,a)+M$. For $A,B\in CB^{p}(X)$ and $x\in X$, define $
\delta _{p}:CB^{p}(X)\times CB^{p}(X)\rightarrow \lbrack 0,\infty )$ and%
\begin{eqnarray*}
	p(x,A) &=&\inf \{p(x,a):a\in A\}, \\
	\delta _{p}(A,B) &=&\sup \{p(a,B):a\in A\}, \\
	H_{p}(A,B) &=&\max \{ \delta _{p}(A,B),\delta _{p}(B,A)\}.
\end{eqnarray*}
It can be verified that $p(x,A)=0$ implies $p^{S}(x,A)=0,$ where
$$p^{S}(x,A)=\inf \{p^{S}(x,a):a\in A\}.$$

\begin{lemma}[\cite{AL1}]
	\label{NaSeLemma4.1.} Let $(X,p)$ be a partial metric
	space and $A$ be a non-empty subset of $X,$ then $a\in \overline{A}$ if and
	only if $p(a,A)=p(a,a).$
\end{lemma}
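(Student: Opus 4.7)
\smartqed
The plan is to unwind both sides using only the definition of the topology $\tau_p$ (via the basis of $p$-balls $B_p(a,\varepsilon) = \{x \in X : p(a,x) < p(a,a) + \varepsilon\}$) and axiom (2) of the partial metric, which gives $p(a,a) \leq p(a,x)$ for every $x \in X$ and hence the a priori lower bound $p(a,a) \leq p(a,A)$.

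For the forward direction, I would assume $a \in \overline{A}$ and fix $\varepsilon > 0$. Since every $\tau_p$-neighborhood of $a$ meets $A$, the basic open set $B_p(a,\varepsilon)$ contains some point $x_\varepsilon \in A$, so $p(a, x_\varepsilon) < p(a,a) + \varepsilon$. Taking the infimum over $A$ yields $p(a,A) \leq p(a, x_\varepsilon) < p(a,a) + \varepsilon$, and letting $\varepsilon \downarrow 0$ gives $p(a,A) \leq p(a,a)$. Combined with the axiom-(2) inequality $p(a,a) \leq p(a,A)$, this forces equality.

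For the converse, I would assume $p(a,A) = p(a,a)$ and show $a \in \overline{A}$ by verifying that every basic neighborhood of $a$ meets $A$. Given $\varepsilon > 0$, the definition of the infimum produces some $x \in A$ with $p(a,x) < p(a,A) + \varepsilon = p(a,a) + \varepsilon$, i.e. $x \in B_p(a,\varepsilon) \cap A$. Since every open set in $\tau_p$ containing $a$ contains such a basic ball, $a$ is in the closure of $A$.

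The argument is essentially routine once one remembers the two non-standard features of partial metric spaces, namely that the self-distance $p(a,a)$ need not be zero and that the open balls are defined relative to $p(a,a)$ rather than to $0$. I expect no real obstacle; the only care needed is to invoke axiom (2) explicitly to get the lower bound $p(a,a) \leq p(a,A)$, which is what turns a one-sided inequality from the definition of infimum into the claimed equality.
\qed
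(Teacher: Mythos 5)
Your argument is correct and is the standard proof of this fact (the paper only cites the lemma from Altun--Simsek without proving it): the forward direction combines the neighborhood characterization of closure with axiom (2), and the converse uses the definition of the infimum to exhibit a point of $A$ in each ball $B_p(a,\varepsilon)$. The only point worth making explicit is that the balls \emph{centered at} $a$ form a neighborhood base at $a$ (a basic ball containing $a$ need not be centered there, but the triangle inequality $p(x,y)\leq p(x,a)+p(a,y)-p(a,a)$ lets you shrink to one that is), which is exactly what your phrase ``every open set in $\tau_p$ containing $a$ contains such a basic ball'' silently relies on.
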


\begin{proposition}[\cite{AY}]
	\label{NaSeProposition4.2.}  Let $(X,p)$ be a
	partial metric space. For any $A,B,C\in CB^{p}(X)$,
	\begin{enumerate}[label=\textup{(\roman*)}, ref=(\roman*)]
		\item $\delta _{p}(A,A)=\sup \{p(a,a):a\in A\};$	
		\item $\delta _{p}(A,A)\leq \delta _{p}(A,B);$	
		\item $\delta _{p}(A,B)=0$ implies $A\subseteq B;$	
		\item $\delta _{p}(A,B)\leq \delta _{p}(A,C)+\delta _{p}(C,B)-\inf
		\limits_{c\in C}p(c,c).$
	\end{enumerate}	
\end{proposition}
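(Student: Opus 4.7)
The plan is to verify each of the four clauses by unwinding the definitions of $p(x,A)$ and $\delta_p(A,B)$ and applying the basic partial metric axioms: the self-distance bound $p(x,x)\leq p(x,y)$ and the modified triangle inequality $p(x,z)\leq p(x,y)+p(y,z)-p(y,y)$ from Definition \ref{NaSeDefinition1.1.}. Lemma \ref{NaSeLemma4.1.} will be needed for (iii).

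For (i), I would first establish the pointwise identity $p(a,A)=p(a,a)$ for every $a\in A$. The inequality $p(a,A)\leq p(a,a)$ follows by taking the candidate $a'=a$ in the infimum, and the reverse inequality follows because axiom (2) gives $p(a,a)\leq p(a,a')$ for all $a'\in A$. Taking supremum over $a\in A$ then produces (i). For (ii), the same axiom (2) yields $p(a,a)\leq p(a,b)$ for every $b\in B$, hence $p(a,a)\leq p(a,B)$; a supremum in $a$ together with (i) gives $\delta_p(A,A)\leq\delta_p(A,B)$.

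For (iii), suppose $\delta_p(A,B)=0$. By (ii) this forces $\delta_p(A,A)=0$, hence $p(a,a)=0$ for all $a\in A$, and also $p(a,B)=0$ for every such $a$. Thus $p(a,B)=p(a,a)$, so Lemma \ref{NaSeLemma4.1.} gives $a\in\overline{B}=B$ (using that $B$ is closed as a member of $CB^p(X)$), whence $A\subseteq B$.

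Clause (iv) is where the main bookkeeping lives, and I expect it to be the only real obstacle. Starting from $p(a,b)\leq p(a,c)+p(c,b)-p(c,c)$ for arbitrary $a\in A$, $b\in B$, $c\in C$, I would take $\inf_{b\in B}$ (which is legitimate because only $p(c,b)$ depends on $b$) to get
\[
p(a,B)\leq p(a,c)+p(c,B)-p(c,c)\leq p(a,c)+\delta_p(C,B)-p(c,c).
\]
Now comes the delicate step: since $p(c,c)\geq\inf_{c'\in C}p(c',c')$, replacing $-p(c,c)$ by the larger quantity $-\inf_{c'\in C}p(c',c')$ preserves the inequality, and crucially decouples the constant from $c$. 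Only then can I take $\inf_{c\in C}$ of the remaining $p(a,c)$ term to obtain $p(a,B)\leq p(a,C)+\delta_p(C,B)-\inf_{c\in C}p(c,c)$, and finally $\sup_{a\in A}$ yields (iv). The subtlety is doing the two infima in the right order; taking $\inf_c$ on $p(a,c)-p(c,c)$ directly would not recombine into $p(a,C)$ minus a constant.
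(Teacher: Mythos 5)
Your argument is correct and complete: all four clauses are verified exactly as in the standard proof from the cited source \cite{AY} (the paper itself imports Proposition \ref{NaSeProposition4.2.} without reproving it). In particular, you correctly identify and handle the one delicate point in (iv) — bounding $-p(c,c)$ by $-\inf_{c'\in C}p(c',c')$ \emph{before} taking the infimum over $c$ so that $p(a,c)$ recombines into $p(a,C)$ — and your use of Lemma \ref{NaSeLemma4.1.} together with closedness of $B$ in (iii) is exactly what is needed.
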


\begin{proposition}[\cite{AY}]
	\label{NaSeProposition4.3.}
	 Let $(X,p)$ be a
	partial metric space. For any $A,B,C\in CB^{p}(X),$
	\begin{enumerate}[label=\textup{\arabic*)}, ref=\arabic*]
		\item $H_{p}(A,A)\leq H_{p}(A,B);$
		\item $H_{p}(A,B)=H_{p}(B,A);$
		\item $H_{p}(A,B)\leq H_{p}(A,C)+H_{p}(C,B)-\inf \limits_{c\in C}p(c,c);$
		\item $H_{p}(A,B)=0$ implies that $A=B.$\smallskip
	\end{enumerate}
\end{proposition}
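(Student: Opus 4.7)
The plan is to derive each of the four clauses directly from Proposition \ref{NaSeProposition4.2.}, using only the definition $H_p(A,B)=\max\{\delta_p(A,B),\delta_p(B,A)\}$ and elementary properties of $\sup$ and $\max$. None of the parts appears to require anything beyond the previously established machinery, so the write-up should be short; the main care is in part 3, where the $-\inf_{c\in C}p(c,c)$ term must be preserved through a $\max$.

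First I would dispose of parts 1, 2 and 4. For (1), observe that by definition $H_p(A,A)=\delta_p(A,A)$, and Proposition \ref{NaSeProposition4.2.}(ii) gives $\delta_p(A,A)\le \delta_p(A,B)\le H_p(A,B)$. Part (2) is immediate from the symmetry of $\max$ in its two arguments. For (4), if $H_p(A,B)=0$ then both $\delta_p(A,B)=0$ and $\delta_p(B,A)=0$; Proposition \ref{NaSeProposition4.2.}(iii) then yields $A\subseteq B$ and $B\subseteq A$, hence $A=B$.

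The substantive step is the triangle inequality (3). I would apply Proposition \ref{NaSeProposition4.2.}(iv) twice, once in each order, obtaining
\begin{align*}
\delta_p(A,B) &\le \delta_p(A,C)+\delta_p(C,B)-\inf_{c\in C}p(c,c),\\
\delta_p(B,A) &\le \delta_p(B,C)+\delta_p(C,A)-\inf_{c\in C}p(c,c).
\end{align*}
The key observation is that on each right-hand side we can bound $\delta_p(A,C),\delta_p(C,A)\le H_p(A,C)$ and $\delta_p(C,B),\delta_p(B,C)\le H_p(C,B)$, so both right-hand sides are dominated by $H_p(A,C)+H_p(C,B)-\inf_{c\in C}p(c,c)$. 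Taking the maximum over the two left-hand sides then gives $H_p(A,B)\le H_p(A,C)+H_p(C,B)-\inf_{c\in C}p(c,c)$, as required.

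The only subtlety I anticipate is bookkeeping around the $-\inf_{c\in C}p(c,c)$ term: because it appears with the \emph{same} sign and value in both of the two Proposition \ref{NaSeProposition4.2.}(iv) inequalities, it correctly survives the $\max$ (whereas if the two sides had different constants one would only get the larger one). No additional continuity, completeness, or closedness hypothesis is needed beyond what is built into $CB^p(X)$, so the proof is essentially a one-line reduction to the preceding proposition.
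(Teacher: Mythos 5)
Your proof is correct. The paper states this proposition as a quoted result from the reference \cite{AY} and gives no proof of its own, so there is nothing to compare against; your reduction of all four clauses to Proposition \ref{NaSeProposition4.2.} via the definition $H_{p}(A,B)=\max \{\delta _{p}(A,B),\delta _{p}(B,A)\}$ — including the observation that the term $-\inf_{c\in C}p(c,c)$ is common to both instances of Proposition \ref{NaSeProposition4.2.}(iv) and therefore survives the maximum — is exactly the standard argument given in that source.
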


 The mapping $H_{p}:CB^{p}(X)\times CB^{p}(X)\rightarrow \lbrack 0,\infty )$\
is called partial Hausdorff metric induced by partial metric $p.$ Every
Hausdorff metric is partial Hausdorff metric but converse is not true (see
\cite[Example 2.6]{AY}).

\begin{theorem}[\cite{AY}]
	\label{NaSeTheorem4.4}  Let $(X,p)$ be a partial
	metric space. If $T:X\rightarrow CB^{p}(X)$ be a multi-valued mapping such
	that for all $x,y\in X,$ we have $H_{p}(Tx,Ty)\leq kp(x,y),$ where $k\in
	(0,1).$ Then $T$ has a fixed point.
\end{theorem}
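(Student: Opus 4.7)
The plan is to adapt Nadler's multi-valued contraction argument to the partial-metric setting; completeness of $(X,p)$, although not stated explicitly, is implicit here (it is the hypothesis of Theorem 2.7 of \cite{AY}). The first technical step is an approximation lemma: for any $A,B \in CB^{p}(X)$, any $a\in A$, and any $q>1$, there exists $b\in B$ with $p(a,b)\le q\,H_{p}(A,B)$. This is justified by observing $p(a,B)\le \delta_{p}(A,B)\le H_{p}(A,B)$, so when $H_{p}(A,B)>0$ one selects $b\in B$ with $p(a,b)\le q\,p(a,B)\le q\,H_{p}(A,B)$; when $H_{p}(A,B)=0$, Proposition \ref{NaSeProposition4.3.}(4) gives $A=B$, so $b:=a$ works.

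Next I would build an orbit. Fix $q>1$ with $\lambda:=qk<1$, choose $x_{0}\in X$ and $x_{1}\in Tx_{0}$, and recursively pick $x_{n+1}\in Tx_{n}$ with $p(x_{n},x_{n+1})\le q\,H_{p}(Tx_{n-1},Tx_{n})\le qk\,p(x_{n-1},x_{n})$. If some $p(x_{n},x_{n+1})=0$ then $x_{n}=x_{n+1}\in Tx_{n}$ and we are done; otherwise induction gives $p(x_{n},x_{n+1})\le \lambda^{n}p(x_{0},x_{1})$. Iterating the partial-metric triangle inequality then yields
\[
p(x_{n},x_{m}) \;\le\; \sum_{j=n}^{m-1} p(x_{j},x_{j+1}) \;\le\; \frac{\lambda^{n}}{1-\lambda}\,p(x_{0},x_{1}) \;\longrightarrow\; 0,
\]
so $\{x_{n}\}$ is Cauchy in $(X,p)$, hence in $(X,p^{S})$ by Lemma \ref{NaSeLemma1.4.}. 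Completeness produces $z\in X$ with $p(z,z)=\lim_{n,m}p(x_{n},x_{m})=0=\lim_{n}p(x_{n},z)$.

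To finish, I would show $z$ is a fixed point via
\[
p(z,Tz) \;\le\; p(z,x_{n+1}) + p(x_{n+1},Tz) - p(x_{n+1},x_{n+1}) \;\le\; p(z,x_{n+1}) + H_{p}(Tx_{n},Tz) \;\le\; p(z,x_{n+1}) + k\,p(x_{n},z),
\]
and letting $n\to\infty$ to obtain $p(z,Tz)=0=p(z,z)$. Since $Tz\in CB^{p}(X)$ is closed in $(X,\tau_{p})$, Lemma \ref{NaSeLemma4.1.} gives $z\in \overline{Tz}=Tz$. The main obstacle is the approximation lemma in the first step: in an ordinary metric space one invokes the definition of infimum to select $b$ with $d(a,b)<(1+\varepsilon)H(A,B)$, but in the partial-metric setting the self-distance $p(a,a)$ need not vanish and $H_{p}(A,B)=0$ does not force $a=b$ for a given pair, which is precisely the subtlety that Proposition \ref{NaSeProposition4.3.} is designed to absorb. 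Once this lemma is in hand, the remainder is a routine adaptation of the classical Nadler argument, with the only extra care being the bookkeeping of the $-p(x_{n+1},x_{n+1})$ terms in the triangle inequality (which help rather than hurt).
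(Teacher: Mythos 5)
Your argument is correct, and it is essentially the standard Nadler-type proof that the cited source \cite{AY} uses for this result; the paper itself states Theorem \ref{NaSeTheorem4.4} without proof, merely quoting \cite{AY}, so there is nothing internal to compare against. You are also right that completeness of $(X,p)$ must be added to the hypotheses (it is present in \cite{AY} and is needed for the limit $z$ to exist), and your handling of the approximation lemma --- including the degenerate case $H_{p}(A,B)=0$, where Proposition \ref{NaSeProposition4.3.} forces $A=B$ and $\delta_{p}(A,A)=0$ forces the self-distances to vanish --- together with the closedness argument via Lemma \ref{NaSeLemma4.1.} is exactly the right way to absorb the partial-metric subtleties.
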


\begin{definition}
	\label{NaSeDefinition4.5.} Let $(X,p)$\ be a partial metric space
	and and $\mathcal{H}_{p}(X)$ denotes the set of all non-empty compact
	subsets of $X.$ Let $\{f_{n}:n=1,\dots,N\}$\ be a finite family of
	self-mappings on $X$ that satisfy
	\begin{equation*}
		\psi (p(f_{i}x,f_{i}y))\leq \psi (M_{p}(x,y))-\varphi (M_{p}(x,y)),
	\end{equation*}
	where
	\begin{eqnarray*}
		M_{p}(x,y) &=&\max \{p(x,y),p(f_{i}x,x),p(f_{i}y,y),p(f_{i}^{2}x,f_{i}x), \\
		&&p(f_{i}^{2}y,y),p(f_{i}^{2}y,f_{i}y),\dfrac{p(f_{i}x,y)+p(f_{i}y,x)}{2}\}
	\end{eqnarray*}
	for every $x,y\in X.$ We call these maps as a family of generalized $\left(
	\psi ,\phi \right) $-contraction mappings. Define $T:\mathcal{H}%
	_{p}(X)\rightarrow \mathcal{H}_{p}(X)$ by
	\begin{eqnarray*}
		T(A) &=&f_{1}(A)\bigcup f_{2}(A)\bigcup \cdot \cdot \cdot \bigcup f_{N}(A) \\
		&=&\bigcup _{n=1}^{N}f_{n}(A),\text{ for each }A\in \mathcal{H}_{p}(X).
	\end{eqnarray*}
	If $f_{n}:X\rightarrow X$, $n=1,\dots,N$ are generalized $\left( \psi ,\phi
	\right)$-contraction mappings, then $(X;f_{1},f_{2},\dots,f_{N})$ is called
	generalized $\left( \psi ,\phi \right)$-iterated function system ($\left(\psi ,\phi \right)$-IFS).
\end{definition}

\begin{definition}
	\label{NaSedefinition4.6.} A nonempty compact set $A\subseteq X$
	is said to be an attractor of the generalized
$\left( \psi ,\phi \right)$-IFS if
\begin{enumerate}[label=\textup{\arabic*)}, ref=\arabic*]
	\item \label{defatr-i1TAeqA}$T(A)=A$ and
	\item \label{defatr-i2basinatr} there is an open set $U\subseteq X$ such that $A\subseteq U$ and $
		\lim \limits_{k\rightarrow \infty }T^{k}(B)=A$ for any compact set $
		B\subseteq U$, where the limit is taken with respect to the partial
		Hausdorff metric.
	\end{enumerate}
	
	The largest open set $U$ satisfying \ref{defatr-i2basinatr} is called a basin of
	attraction.
\end{definition}

\begin{theorem}
	\label{NaSeTheorem4.7} Let $(X,p)$\ be a complete partial
	metric space and $(X;f_{n},n=1,\dots,k)$ a generalized $\left( \psi ,\phi
	\right)$-iterated function system. Let $T:\mathcal{H}_{p}(X)\rightarrow
	\mathcal{H}_{p}(X)$ be a mapping defined by
	\begin{equation*}
		T(A)=\bigcup_{n=1}^{k}f_{n}(A),\text{ for all } A\in \mathcal{H}_{p}(X).
	\end{equation*}
	Suppose that, there exists control functions $\psi $\ and $\varphi $\ such
	that for every $A$, $B\in \mathcal{H}_{p}\left( X\right) ,$
	\begin{equation}
		\psi (H_{p}(T\left( A\right) ,T\left( B\right) ))\leq \psi (M_{T}(A,B))-\phi
		(M_{T}(A,B))  \label{t.OMSeq1.17}
	\end{equation}
	is satisfied, where
	\begin{eqnarray*}
		M_{T}(A,B) &=&\max \{H_{p}(A,B),H_{p}(A,T\left( A\right) ),H_{p}(B,T\left(
		B\right) ),H_{p}(T^{2}\left( A\right) ,T\left( A\right) ), \\
		&&H_{p}(T^{2}\left( A\right) ,B),H_{p}(T^{2}\left( A\right) ,T\left(
		B\right) ),\dfrac{H_{p}(A,T\left( B\right) )+H_{p}(B,T\left( A\right) )}{2}%
		\}.
	\end{eqnarray*}
	Then $T$ has a unique fixed point $U\in \mathcal{H}_{p}\left( X\right) ,$
	that is%
	\begin{equation*}
		U=T\left( U\right) =\bigcup _{n=1}^{k}f_{n}(U).
	\end{equation*}
	Moreover,\ for any initial set $A_{0}\in \mathcal{H}_{p}\left( X\right) $,
	the sequence $\{A_{0},T\left( A_{0}\right) ,T^{2}\left(
	A_{0}\right) ,\dots\}$ of compact sets converges to a fixed point of $T$.
\end{theorem}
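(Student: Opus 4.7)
My plan is to run the standard Picard iteration on the hyperspace $(\mathcal{H}_p(X), H_p)$, reducing Theorem~\ref{NaSeTheorem4.7} to a single-map analogue of Theorem~\ref{NaSeTheorem2.1.}. Starting from an arbitrary $A_0 \in \mathcal{H}_p(X)$, I form the orbit $A_{n+1} = T(A_n)$. Each $A_n$ stays in $\mathcal{H}_p(X)$ because each contractive $f_i$ is continuous (a consequence of the $(\psi,\varphi)$-contraction condition together with continuity of $\psi, \varphi$ at $0$) and a finite union of compact sets is compact, so the orbit is well defined.

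Write $d_n := H_p(A_n, A_{n+1})$ and apply (\ref{t.OMSeq1.17}) with $(A,B) = (A_{n-1}, A_n)$. The seven members defining $M_T(A_{n-1}, A_n)$ are built from $H_p(A_i, A_j)$ for $i,j \in \{n-1, n, n+1\}$ together with the self-distance terms $H_p(A_{n+1}, A_{n+1})$ and $H_p(A_n, A_n)$. Using Proposition~\ref{NaSeProposition4.3.}---in particular the triangle-type inequality (3) with its $\inf$-correction, together with the bound (1) on self-distances---each of these is dominated by $\max\{d_{n-1}, d_n\}$. Were $d_n > d_{n-1}$, (\ref{t.OMSeq1.17}) would give $\psi(d_n) \leq \psi(d_n) - \phi(d_n)$, a contradiction; hence $\{d_n\}$ is nonincreasing and converges to some $c \geq 0$. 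Taking $\limsup$ in (\ref{t.OMSeq1.17}) and using the continuity of $\psi$ together with the lower semicontinuity of $\phi$ yields $\psi(c) \leq \psi(c) - \phi(c)$, forcing $c = 0$.

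Next I will show $\{A_n\}$ is Cauchy in $(\mathcal{H}_p(X), H_p)$ by mimicking the two-index ladder argument from the proof of Theorem~\ref{NaSeTheorem2.1.}. Assuming not, extract subsequences $A_{m_k}, A_{n_k}$ with $H_p(A_{m_k}, A_{n_k}) \geq \varepsilon$ and $H_p(A_{m_k - 1}, A_{n_k}) < \varepsilon$, then iterate Proposition~\ref{NaSeProposition4.3.}(3) with $d_n \to 0$ to obtain $\lim_k H_p(A_{m_k}, A_{n_k}) = \varepsilon$ and the corresponding shifted limits. Substituting into (\ref{t.OMSeq1.17}) at $(A_{m_k - 1}, A_{n_k - 1})$ and passing to $\limsup$ produces $\psi(\varepsilon) \leq \psi(\varepsilon) - \phi(\varepsilon)$, a contradiction. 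Completeness of $(\mathcal{H}_p(X), H_p)$---inherited from that of $(X, p)$---then delivers a limit $U \in \mathcal{H}_p(X)$. Since $A_{n+1} = T(A_n)$ also tends to $U$, a final application of (\ref{t.OMSeq1.17}) to $(A_n, U)$ with $n \to \infty$ yields $\psi(H_p(U, T(U))) \leq \psi(H_p(U, T(U))) - \phi(H_p(U, T(U)))$, whence $H_p(U, T(U)) = 0$ and $U = T(U)$ by Proposition~\ref{NaSeProposition4.3.}(4).

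For uniqueness, if $V$ is a second fixed point then $T(V) = V$ and $T^2(U) = U$ collapse every term of $M_T(U, V)$ to either $H_p(U, V)$ or a self-distance bounded by $H_p(U, V)$ via Proposition~\ref{NaSeProposition4.3.}(1), so $M_T(U, V) = H_p(U, V)$; (\ref{t.OMSeq1.17}) then forces $H_p(U, V) = 0$ and $U = V$ by Proposition~\ref{NaSeProposition4.3.}(4). I expect the principal obstacle to be the careful bookkeeping of the seven-term $M_T$ expression: because $H_p(A, A)$ need not vanish in the partial-Hausdorff setting, every bound has to be justified through Proposition~\ref{NaSeProposition4.3.}(1)--(3) rather than through an ordinary metric triangle inequality. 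Once these bounds are settled the argument becomes a faithful transcription of the proof of Theorem~\ref{NaSeTheorem2.1.} to the hyperspace.
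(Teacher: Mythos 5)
Your proposal is correct and follows essentially the same route as the paper: Picard iteration on $\mathcal{H}_p(X)$, monotonicity of $H_p(A_m,A_{m+1})$ forcing the limit $0$, the two-index ladder argument for the Cauchy property via Proposition~\ref{NaSeProposition4.3.}(3), completeness of the hyperspace, and then the fixed-point and uniqueness steps from (\ref{t.OMSeq1.17}). The only cosmetic difference is that you compress the paper's seven-case analysis of which term of $M_T(A_n,U)$ attains the maximum into a single passage to the limit, which works because every candidate limit is dominated by $H_p(U,T(U))$ and $\psi$ is nondecreasing.
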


\begin{proof} \smartqed Let $A_{0}\ $be an arbitrary element in $\mathcal{H}_{p}\left( X\right) .$ If $A_{0}=T\left( A_{0}\right) ,$ then the
	proof is finished. So we assume that $A_{0}\neq T\left( A_{0}\right) .$
	Define, for $m\in \mathbb{Z}_{\geq 0}$,  	
\begin{equation*}
A_{1}=T(A_{0}),\text{ }A_{2}=T\left( A_{1}\right) ,\dots,A_{m+1}=T\left(A_{m}\right).
\end{equation*}
	
	We may assume that $A_{m}\neq A_{m+1}$ for all $m\in
	\mathbb{Z}_{\geq 0}.$ If not, then $A_{k}=A_{k+1}$ for some $k$ implies $A_{k}=T(A_{k})\ $and
	this completes the proof. Take $A_{m}\neq A_{m+1}$ for all $m\in
	\mathbb{Z}_{\geq 0}
	$. From (\ref{t.OMSeq1.17}), we have
	\begin{eqnarray*}
		\psi \left( H_{p}(A_{m+1},A_{m+2})\right) &=&\psi \left( H_{p}(T\left(
		A_{m}\right) ,T\left( A_{m+1}\right) )\right) \\
		&\leq &\psi \left( M_{T}\left( A_{m},A_{m+1}\right) \right) -\phi \left(
		M_{T}\left( A_{m},A_{m+1}\right) \right) ,
	\end{eqnarray*}
	where
	\begin{eqnarray*}
		& M_{T}\left( A_{m},A_{m+1}\right) =\max \{H_{p}(A_{m},A_{m+1}),H_{p}\left(
		A_{m},T\left( A_{m}\right) \right) ,H_{p}\left( A_{m+1},T\left(A_{m+1}\right) \right) , \\
		&H_{p}(T^{2}\left( A_{m}\right) ,T\left( A_{m}\right) ),H_{p}\left(
		T^{2}\left( A_{m}\right) ,A_{m+1}\right) ,H_{p}\left( T^{2}\left(
		A_{m}\right) ,T\left( A_{m+1}\right) \right) , \\
		&\displaystyle{\frac{H_{p}\left( A_{m},T\left( A_{m+1}\right) \right) +H_{p}\left(
			A_{m+1},T\left( A_{m}\right) \right) }{2}}\} \\
		&=\max \{H_{p}(A_{m},A_{m+1}),H_{p}\left( A_{m},A_{m+1}\right) ,H_{p}\left(
		A_{m+1},A_{m+2}\right) , \\
		&H_{p}(A_{m+2},A_{m+1}),H_{p}\left( A_{m+2},A_{m+1}\right) ,H_{p}\left(
		A_{m+2},A_{m+2}\right) , \\
		&\displaystyle{\frac{H_{p}\left( A_{m},A_{m+2}\right) +H_{p}\left( A_{m+1},A_{m+1}\right)
		}{2}}\} \\
		&\leq \max \{H_{p}(A_{m},A_{m+1}),H_{p}\left( A_{m+1},A_{m+2}\right),
\displaystyle{\frac{H_{p}\left( A_{m},A_{m+1}\right) +H_{p}\left( A_{m+1},A_{m+2}\right)}{2}}\}
		\\
		&=\max \{H_{p}\left( A_{m},A_{m+1}\right) ,H_{p}\left(
		A_{m+1},A_{m+2}\right) \}.
	\end{eqnarray*}
	As $\max \{H_{p}\left( A_{m},A_{m+1}\right) ,H_{p}\left(
	A_{m+1},A_{m+2}\right) \} \leq M_{T}\left( A_{m},A_{m+1}\right).$ Therefore,
	$$M_{T}\left( A_{m},A_{m+1}\right) =\max \{H_{p}\left( A_{m},A_{m+1}\right)
	,H_{p}\left( A_{m+1},A_{m+2}\right) \}.$$
	
	Now if $M_{T}\left( A_{m},A_{m+1}\right) =H_{p}\left( A_{m+1},A_{m+2}\right)
	,$ then (\ref{t.OMSeq1.17}) gives that%
	\begin{equation*}
		\psi \left( H_{p}(A_{m+1},A_{m+2})\right) \leq \psi (H_{p}\left(
		A_{m+1},A_{m+2}\right) )-\phi \left( H_{p}\left( A_{m+1},A_{m+2}\right)
		\right) ,
	\end{equation*}
	a contradiction. Hence $M_{T}\left( A_{m},A_{m+1}\right) =H_{p}\left(
	A_{m+1},A_{m+2}\right) $ and%
	\begin{eqnarray*}
		\psi \left( H_{p}(A_{m+1},A_{m+2})\right) &\leq &\psi (H_{p}\left(
		A_{m},A_{m+1}\right) )-\phi \left( H_{p}\left( A_{m},A_{m+1}\right) \right)
		\\
		&\leq &\psi (H_{p}\left( A_{m},A_{m+1}\right) ),
	\end{eqnarray*}
	that is, $H_{p}\left( A_{m+1},A_{m+2}\right) \leq H_{p}\left(
	A_{m},A_{m+1}\right) .$ Thus the sequence $\{H_{p}\left(
	A_{m},A_{m+1}\right) \}$is nonincreasing. Hence there exists $c\geq 0$ such
	that $\lim \limits_{n\rightarrow \infty }H_{p}(A_{n},A_{n+1})=c.$ Suppose
	that $c>0.$ Then, $\psi (H_{p}(A_{n+2},A_{n+1}))\leq \psi
	(H_{p}(A_{n+1},A_{n}))-\varphi (H_{p}(A_{n+1},A_{n})),$ and by lower
	semicontinuity of $\varphi ,$ we have
	\begin{equation*}
		\underset{n\rightarrow \infty }{\lim \sup }\ \psi (H_{p}(A_{n+2},A_{n+1}))\leq
		\underset{n\rightarrow \infty }{\lim \sup }\ \psi (H_{p}(A_{n+1},A_{n}))-%
		\underset{n\rightarrow \infty }{\lim \inf }\ \varphi (H_{p}(A_{n+1},A_{n})),
	\end{equation*}
	which implies that $\psi (c)\leq \psi (c)-\varphi (c),$ a contradiction.
	Therefore $c=0.$ So we conclude that%
	\begin{equation}
		\lim_{n\rightarrow \infty }H_{p}(A_{n+1},A_{n})=0.  \label{t.OMSeq1.18}
	\end{equation}%
	Now, we show that $\lim \limits_{n,m\rightarrow \infty }H_{p}(A_{n},A_{m})=0$
	. If not, there is $\varepsilon >0,$ and there exist even integers $n_{k}$
	and $m_{k}$ with $m_{k}>n_{k}>k$ such that%
	\begin{equation}
		H_{p}(A_{m_{k}},A_{n_{k}})\geq \varepsilon ,  \label{t.OMSeq1.19}
	\end{equation}%
	and $H_{p}(A_{m_{k}-2},A_{n_{k}})<\varepsilon .$ Since
	\begin{eqnarray*}
		\varepsilon &\leq &H_{p}(A_{m_{k}},A_{n_{k}}) \\
		&\leq
		&H_{p}(A_{n_{k}},A_{m_{k}-2})+H_{p}(A_{m_{k}-2},A_{m_{k}})-\inf_{a_{1}\in
			A_{m_{k}-2}}p(a_{1},a_{1}) \\
		&\leq
		&H_{p}(A_{n_{k}},A_{m_{k}-2})+H_{p}(A_{m_{k}-2},A_{m_{k}-1})+H_{p}(A_{m_{k}-1},A_{m_{k}})
		\\
		&&-\inf_{a_{2}\in A_{m_{k}-1}}p(a_{2},a_{2})-\inf_{a_{1}\in
			A_{m_{k}-2}}p(a_{1},a_{1}).
	\end{eqnarray*}
	From (\ref{t.OMSeq1.18}) and (\ref{t.OMSeq1.19}), we have
	\begin{equation}
		\lim_{k\rightarrow \infty }H_{p}(A_{m_{k}},A_{n_{k}})=\varepsilon .
		\label{t.OMSeq1.20}
	\end{equation}%
	Also (\ref{t.OMSeq1.19}) and inequality $$H_{p}(A_{m_{k}},A_{n_{k}})\leq
	H_{p}(A_{m_{k}},A_{m_{k}-1})+H_{p}(A_{m_{k}-1},A_{n_{k}})-\inf
	\limits_{a_{2}\in A_{m_{k}-1}}p(a_{2},a_{2})$$ give that $\varepsilon \leq
	\lim \limits_{k\rightarrow \infty }H_{p}(A_{m_{k}-1},A_{n_{k}}),$ while
	inequality $$H_{p}(A_{m_{k}-1},A_{n_{k}})\leq
	H_{p}(A_{m_{k}-1},A_{m_{k}})+H_{p}(A_{m_{k}},A_{n_{k}})-\inf
	\limits_{a_{3}\in A_{m_{k}}}p(a_{3},a_{3})$$ yields
$\lim\limits_{k\rightarrow \infty }H_{p}(A_{m_{k}-1},A_{n_{k}})\leq \varepsilon ,$
	and hence
	\begin{equation}
		\lim_{k\rightarrow \infty }H_{p}(A_{m_{k}-1},A_{n_{k}})=\varepsilon .
		\label{t.OMSeq1.21}
	\end{equation}%
	Now (\ref{t.OMSeq1.21}) and inequality $$H_{p}(A_{m_{k}-1},A_{n_{k}})\leq
	H_{p}(A_{m_{k}-1},A_{n_{k}+1})+H_{p}(A_{n_{k}+1},A_{n_{k}})-\inf_{a_{4}\in
		A_{n_{k}+1}}p(a_{4},a_{4})$$ give $$\varepsilon \leq \lim
	\limits_{k\rightarrow \infty }H_{p}(A_{m_{k}-1},A_{n_{k}+1}),$$ while
	inequality $$H_{p}(A_{m_{k}-1},A_{n_{k}+1})\leq
	H_{p}(A_{m_{k}-1},A_{n_{k}})+H_{p}(A_{n_{k}},A_{n_{k}+1})-\inf
	\limits_{a_{5}\in A_{n_{k}}}p(a_{5},a_{5})$$ yields
$\lim\limits_{k\rightarrow \infty }H_{p}(A_{m_{k}-1},A_{n_{k}+1})\leq \varepsilon ,$ and so
	\begin{equation}
		\lim_{k\rightarrow \infty }H_{p}(A_{m_{k}-1},A_{n_{k}+1})=\varepsilon .
		\label{t.OMSeq1.22}
	\end{equation}%
	As%
	\begin{eqnarray*}
		M_{T}(A_{n_{k}},A_{m_{k}-1}) &=&\max
		\{H_{p}(A_{n_{k}},A_{m_{k}-1}),H_{p}(A_{n_{k}},A_{n_{k}}), \\
		&&H_{p}(A_{m_{k}-1},A_{m_{k}-1}),H_{p}(A_{n_{k}+2},A_{n_{k}}),H_{p}(A_{n_{k}+2},A_{m_{k}-1}),
		\\
		&&H_{p}(A_{n_{k}+2},A_{m_{k}}),\frac{
			H_{p}(A_{n_{k}},A_{m_{k}-1})+H_{p}(A_{n_{k}},A_{m_{k}-1})}{2}\}.
	\end{eqnarray*}
	So, $\lim \limits_{k\rightarrow \infty }M_{T}(x_{n_{k}},x_{m_{k}-1})=\max \{
	\varepsilon ,0,0,0,\varepsilon ,\varepsilon ,\varepsilon \}=\varepsilon .$
	From (\ref{t.OMSeq1.22}), we obtain%
	\begin{eqnarray*}
		\psi (H_{p}(A_{n_{k}+1},A_{m_{k}})) &=&\psi (H_{p}(A_{n_{k}},A_{m_{k}-1})) \\
		&\leq &\psi (M_{T}(A_{n_{k}},A_{m_{k}-1}))-\varphi
		(M_{T}(A_{n_{k}},A_{m_{k}-1})).
	\end{eqnarray*}
	Taking upper limit as $k\rightarrow \infty $ implies that $\psi (\varepsilon
	)\leq \psi (\varepsilon )-\varphi (\varepsilon ),$ a contradiction as $
	\varepsilon >0$. Therefore $\{A_{n}\}$ is a Cauchy sequence in $X.$ Since $(%
	\mathcal{H}_{p}(X),p)$ is complete as $(X,p)$ is complete, so $\lim
	\limits_{n\rightarrow \infty }H_{p}(A_{n},U)=H_{p}\left( U,U\right) $\ for
	some $U\in \mathcal{H}_{p}(X),$ that is, we have $A_{n}\rightarrow U$ as $
	n\rightarrow \infty .$
	
	In order to show that $U$ is the fixed point of $T,$ we contrary assume that
	$H_{p}\left( U,T\left( U\right) \right) \neq 0$. Now
	\begin{equation}
\begin{array}{ll}
		\psi \left( H_{p}(A_{n+1},T\left( U\right) )\right) & =\psi (H_{p}(T\left(
		A_{n}\right) ,T\left( U\right) ))\\
& \leq \psi \left( M_{T}\left( A_{n},U\right)
		\right) -\phi \left( M_{T}\left( A_{n},U\right) \right) ,
\end{array}
\label{t.OMSeq1.23}
	\end{equation}
	where
	\begin{eqnarray*}
		M_{T}\left( A_{n},U\right) &=&\max \{H_{p}(A_{n},U),H_{p}(A_{n},T\left(
		A_{n}\right) ),H_{p}(U,T\left( U\right) ),H_{p}(T^{2}\left( A_{n}\right)
		,T\left( A_{n}\right) ), \\
		&&H_{p}(T^{2}\left( A_{n}\right) ,U),H_{p}(T^{2}\left( A_{n}\right) ,T\left(
		U\right) ),\frac{H_{p}(A_{n},T\left( U\right) )+H_{p}(U,T\left( A_{n}\right)
			)}{2}\} \\
		&=&\max \{H_{p}(A_{n},U),H_{p}(A_{n},A_{n+1}),H_{p}(U,T\left( U\right)
		),H_{p}(A_{n+2},A_{n+1}), \\
		&&H_{p}(A_{n+2},U),H_{p}(A_{n+2},T\left( U\right) ),\frac{
			H_{p}(A_{n},T\left( U\right) )+H_{p}(U,A_{n+1})}{2}\}.
	\end{eqnarray*}
	Now we consider the following cases:
	\begin{enumerate}[label=\textup{\arabic*)}, ref=\arabic*]
		\item If $M_{T}\left( A_{n},U\right) =H_{p}(A_{n},U,),$ then on
	taking upper limit as $n\rightarrow \infty $ in (\ref{t.OMSeq1.23}), we have
	\begin{equation*}
		\psi \left( H_{p}(T\left( U\right) ,U)\right) \leq \psi \left( H_{p}\left(
		U,U\right) \right) -\psi \left( H_{p}\left( U,U\right) \right) ,
	\end{equation*}
	a contradiction.	
\item When $M_{T}\left( A_{n},U\right) =H_{p}(A_{n},A_{n+1}),$ then
	on taking upper limit as $n\rightarrow \infty $ in (\ref{t.OMSeq1.23}), implies
	\begin{equation*}
		\psi \left( H_{p}(T\left( U\right) ,U)\right) \leq \psi \left( H_{p}\left(
		U,U\right) \right) -\phi \left( H_{p}\left( U,U\right) \right) ,
	\end{equation*}
	gives a contradiction.	
\item In case $M_{T}\left( A_{n},U\right) =H_{p}(U,T\left( U\right)
	),$ then on taking upper limit as $n\rightarrow \infty $ in (\ref{t.OMSeq1.23}), we get
	\begin{equation*}
		\psi \left( H_{p}(T\left( U\right) ,U)\right) \leq \psi \left( H_{p}\left(
		U,T\left( U\right) \right) \right) -\phi \left( H_{p}\left( U,T\left(
		U\right) \right) \right) ,
	\end{equation*}
	a contradiction.	
\item If $M_{T}\left( A_{n},U\right) =\dfrac{H_{p}(A_{n},T\left(
		U\right) )+H_{p}(U,A_{n+1})}{2},$ then on upper taking limit as $
	n\rightarrow \infty ,$ we have
	\begin{multline*}
		\psi \left( H_{p}(T\left( U\right) ,U)\right) \leq \psi (\frac{H_{p}\left(
			U,T\left( U\right) \right) +H_{p}\left( U,U\right) }{2})\\
            -\phi (\frac{H_{p}\left( U,T\left( U\right) \right) +H_{p}\left( U,U\right) }{2}) \\
		=\psi (\frac{H_{p}\left( U,T\left( U\right) \right) }{2})-\phi (\frac{
			H_{p}\left( U,T\left( U\right) \right) }{2}),
	\end{multline*}
	a contradiction.	
\item When $M_{T}\left( A_{n},U\right) =H_{p}(A_{n+2},A_{n+1}),$
	then on taking upper limit as $n\rightarrow \infty $ in (\ref{t.OMSeq1.23}), we get
	\begin{equation*}
		\psi \left( H_{p}(T\left( U\right) ,U)\right) \leq \psi \left( H_{p}\left(
		U,U\right) \right) -\phi \left( H_{p}\left( U,U\right) \right) ,
	\end{equation*}
	gives a contradiction.	
\item In case $M_{T}\left( A_{n},U\right) =H_{p}(A_{n+2},U),$ then
	on taking upper limit as $n\rightarrow \infty $ in (\ref{t.OMSeq1.23}), we get
	\begin{equation*}
		\psi \left( H_{p}(T\left( U\right) ,U)\right) \leq \psi \left( H_{p}\left(
		U,U\right) \right) -\phi \left( H_{p}\left( U,U\right) \right) ,
	\end{equation*}
	a contradiction.	
\item Finally if $M_{T}\left( A_{n},U\right) =H_{p}(A_{n+2},T\left(
	U\right) ),$ then on taking upper limit as $n\rightarrow \infty ,$ we have
	\begin{equation*}
		\psi \left( H_{p}(T\left( U\right) ,U)\right) \leq \psi (H_{p}(U,T\left(
		U\right) ))-\phi \left( H_{p}\left( U,U\right) \right) ,
	\end{equation*}
	a contradiction.
	\end{enumerate}
	 Thus, $U$ is the fixed point of $T$.
	
	 To show the uniqueness of fixed point of $T$, assume that $U$ and $V$ are two fixed points of $T$ with $H_{p}\left( U,V\right) $ is not zero.
	From (\ref{t.OMSeq1.17}), we obtain that
	\begin{eqnarray*}
		\psi (H_{p}(U,V)) &=&\psi (H_{p}(T\left( U\right) ,T\left( V\right) )) \\
		&\leq &\psi \left( M_{T}\left( U,V\right) \right) -\phi \left( M_{T}\left(
		U,V\right) \right),
	\end{eqnarray*}
	where
	\begin{align*}
	&	M_{T}\left( U,V\right) = \max \{H_{p}(U,V),H_{p}(U,T\left( U\right)
		),H_{p}(V,T\left( V\right) ),\\
&  \frac{H_{p}(U,T\left( V\right))+H_{p}(V,T\left( U\right) )}{2}, H_{p}(T^{2}\left( U\right) ,U),H_{p}(T^{2}\left( U\right)
		,V),H_{p}(T^{2}\left( U\right) ,T\left( V\right) )\} \\
		&=\max \{H_{p}\left( U,V\right) ,H_{p}(U,U),H_{p}(V,V),
        \\
        & \hspace{2cm} \frac{H_{p}(U,V)+H_{p}(V,U)}{2},H_{p}\left( U,U\right) ,H_{p}(U,V),H_{p}(U,V)\} \\
		&=H_{p}(U,V),
	\end{align*}
	that is,
	\begin{equation*}
		\psi (H_{p}(U,V))\leq \psi \left( H_{p}\left( U,V\right) \right) -\phi
		\left( H_{p}\left( U,V\right) \right) ,
	\end{equation*}
	a contradiction. Thus $T$ has a unique fixed point $U\in \mathcal{H}_{p}(X)$.
\qed \end{proof}

\begin{remark}
	\label{NaSeRemark4.8.} In Theorem \ref{NaSeTheorem4.7}, if we take $\mathcal{S}
	(X) $ the collection of all singleton subsets of $X,$ then clearly
$\mathcal{S}(X)\subseteq \mathcal{H}_{p}(X).$ Moreover, consider $f_{n}=f$ for each $n,$ where $f=f_{1}$ then the mapping $T$ becomes
	\begin{equation*}
		T(x)=f(x).
	\end{equation*}
\end{remark}

With this setting, we obtain the following fixed point result.

\begin{corollary}
	\label{NaSeCorollary4.9.}
	Let $(X,p)$ be a complete partial
	metric space and $\{X:f_{n},n=1,\dots, k\}$ a generalized iterated function system.
Let $f:X\rightarrow X$ be a mapping defined as in Remark \ref{NaSeRemark4.8.}. Suppose that, there
	exists control functions $\psi $ and $\varphi $ such that for any $x,y\in
	\mathcal{H}_{p}\left( X\right) $, the following holds:
	\begin{equation*}
		\psi \left( p\left( fx,fy\right) \right) \leq \psi (M_{p}(x,y))-\phi
		(M_{p}(x,y)),
	\end{equation*}
	where
	\begin{eqnarray*}
		M_{p}(x,y)=&&\max \{
		p(x,y),p(x,fx),p(y,fy),p(f^{2}x,y),\\
		&&p(f^{2}x,fx),p(f^{2}x,fy),\dfrac{p(x,fy)+p(y,fx)}{2} \} .
\end{eqnarray*}
	Then $f$ has a unique fixed point $x\in X,$ Moreover, for any initial set $
	x_{0}\in X$, the sequence of compact sets $\{x_{0},fx_{0},f^{2}x_{0},\dots\}$
	converges to a fixed point of $f$.
\end{corollary}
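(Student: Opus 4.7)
The approach is to derive Corollary~\ref{NaSeCorollary4.9.} as a direct specialization of Theorem~\ref{NaSeTheorem4.7} to singleton subsets, exactly as Remark~\ref{NaSeRemark4.8.} telegraphs. I would begin by observing that $\mathcal{S}(X)\subseteq\mathcal{H}_p(X)$ since every singleton is nonempty and compact, and that when all $f_n$ coincide with $f$ the operator $T$ acts on singletons by $T(\{x\})=\{f(x)\}$, so $\mathcal{S}(X)$ is invariant under $T$.

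The key computational observation is that the partial Hausdorff metric restricted to singletons collapses to $p$: for any $x,y\in X$,
\begin{equation*}
H_p(\{x\},\{y\}) = \max\{\delta_p(\{x\},\{y\}),\delta_p(\{y\},\{x\})\} = p(x,y).
\end{equation*}
Substituting $A=\{x\}$ and $B=\{y\}$ into the expression for $M_T(A,B)$ from Theorem~\ref{NaSeTheorem4.7}, each of the seven entries reduces to the corresponding entry of $M_p(x,y)$; for instance $H_p(T^2(\{x\}),T(\{y\})) = p(f^2x,fy)$ and $\tfrac{1}{2}(H_p(\{x\},T(\{y\}))+H_p(\{y\},T(\{x\}))) = \tfrac{1}{2}(p(x,fy)+p(y,fx))$. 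Consequently $M_T(\{x\},\{y\})=M_p(x,y)$, and the contractive hypothesis of the corollary is nothing other than the hypothesis of Theorem~\ref{NaSeTheorem4.7} evaluated on pairs of singletons.

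Starting the scheme of Theorem~\ref{NaSeTheorem4.7} from $A_0=\{x_0\}$, the iterates $A_n=T^n(A_0)=\{f^n(x_0)\}$ remain in $\mathcal{S}(X)$. Running the Cauchy argument of that theorem with $H_p$ replaced throughout by $p$ shows that $\{f^n(x_0)\}$ is Cauchy in $(X,p)$, hence Cauchy in $(X,p^S)$ by Lemma~\ref{NaSeLemma1.4.}; completeness produces a limit $u\in X$. The identification of the fixed point in Theorem~\ref{NaSeTheorem4.7} gives $T(\{u\})=\{u\}$, which is exactly $f(u)=u$. Uniqueness transfers: two distinct fixed points of $f$ would give two distinct singleton fixed points of $T$, contradicting the uniqueness clause of Theorem~\ref{NaSeTheorem4.7}, and convergence of $\{f^n(x_0)\}$ to $u$ is precisely the convergence of $\{A_n\}$ to $\{u\}$ in $H_p$.

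The proof is essentially bookkeeping rather than a new argument; the only step demanding any care is verifying that each of the seven terms in $M_T$ collapses correctly to the corresponding term in $M_p$ under the singleton substitution. I foresee no real obstacle, as the reduction is immediate from the identity $H_p(\{a\},\{b\})=p(a,b)$.
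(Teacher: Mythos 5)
Your proposal is correct and follows exactly the route the paper intends: the paper gives no separate proof of Corollary~\ref{NaSeCorollary4.9.}, deriving it from Theorem~\ref{NaSeTheorem4.7} via the singleton specialization of Remark~\ref{NaSeRemark4.8.}, with the collapse $H_p(\{x\},\{y\})=p(x,y)$ turning $M_T$ into $M_p$ term by term. Your added care in rerunning the argument restricted to singletons (rather than invoking Theorem~\ref{NaSeTheorem4.7} as a black box, whose hypothesis on all of $\mathcal{H}_p(X)$ is not available here) is the right way to handle the one genuine subtlety.
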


\begin{corollary}
	\label{NaSeCorollary4.10.}
	Let $(X,p)$ be a complete partial
	metric space and $(X;f_{n},n=1,\dots, k)$ be iterated function system where each $f_{i}$ for
	$i=1,\dots,k$ is a contraction self-mapping on $X$. Then $T:\mathcal{H}(X)\rightarrow \mathcal{H}(X)$ defined in Theorem \ref{NaSeTheorem4.7} has a unique fixed
	point in $\mathcal{H}\left( X\right) .$ Furthermore, for any set $A_{0}\in
	\mathcal{H}\left( X\right) $, the sequence of compact sets $\{A_{0},T\left(A_{0}\right) ,T^{2}\left( A_{0}\right) ,\dots\}$ converges to a fixed point of
	$T$.
\end{corollary}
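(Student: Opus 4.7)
The plan is to reduce the hypotheses of Corollary~\ref{NaSeCorollary4.10.} to those of Theorem~\ref{NaSeTheorem4.7} by a suitable choice of control functions, and then invoke that theorem directly. First I would let $\lambda_i\in[0,1)$ denote the contraction constant of each $f_i$, and set $\lambda=\max_{1\le i\le k}\lambda_i<1$, so that $p(f_i x,f_i y)\le \lambda\, p(x,y)$ for all $x,y\in X$ and all $i=1,\dots,k$.

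The next step, which I expect to be the main technical step, is to verify that the induced operator $T$ is itself a $\lambda$-contraction on $(\mathcal{H}_p(X),H_p)$, i.e.
\begin{equation*}
H_p(T(A),T(B))\le \lambda\, H_p(A,B), \quad A,B\in \mathcal{H}_p(X).
\end{equation*}
Since $T(A)=\bigcup_{n=1}^{k}f_n(A)$, this reduces via the standard union estimate
\begin{equation*}
H_p\Bigl(\bigcup_{n=1}^{k} U_n,\bigcup_{n=1}^{k} V_n\Bigr)\le \max_{1\le n\le k} H_p(U_n,V_n)
\end{equation*}
to showing $H_p(f_n(A),f_n(B))\le \lambda H_p(A,B)$ for each $n$, which follows by unwinding the definitions of $p(x,A)$, $\delta_p$ and $H_p$ and applying the pointwise contraction estimate on $f_n$. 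The care needed here is that the union inequality must be verified for the \emph{partial} Hausdorff metric $H_p$ rather than the classical Hausdorff metric; this uses Proposition~\ref{NaSeProposition4.3.} together with the definition of $\delta_p(A,B)=\sup_{a\in A}p(a,B)$.

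With $T$ a $\lambda$-contraction in hand, I would define control functions $\psi(t)=t$ and $\varphi(t)=(1-\lambda)t$. These are continuous, nondecreasing, nonnegative, and vanish only at $t=0$, so they satisfy Definition~\ref{NaSedefinition1.10.}. Since $H_p(A,B)\le M_T(A,B)$ by the very definition of $M_T$ in Theorem~\ref{NaSeTheorem4.7}, we obtain
\begin{equation*}
\psi(H_p(T(A),T(B)))=H_p(T(A),T(B))\le \lambda H_p(A,B)\le \lambda\, M_T(A,B),
\end{equation*}
and hence
\begin{equation*}
\psi(H_p(T(A),T(B)))\le M_T(A,B)-(1-\lambda)M_T(A,B)=\psi(M_T(A,B))-\varphi(M_T(A,B)),
\end{equation*}
which is exactly the contractive condition~\eqref{t.OMSeq1.17}.

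It follows that $(X;f_1,\dots,f_k)$ is a generalized $(\psi,\varphi)$-iterated function system in the sense of Definition~\ref{NaSeDefinition4.5.}, so Theorem~\ref{NaSeTheorem4.7} applies and yields a unique fixed point $U\in\mathcal{H}_p(X)$ of $T$ together with convergence of the Picard iterates $\{T^n(A_0)\}$ to $U$ for every $A_0\in\mathcal{H}_p(X)$. As noted, the only nontrivial obstacle is establishing the $\lambda$-contractivity of $T$ under the partial Hausdorff metric; once that is in place, the remainder is a routine linearization of the general $(\psi,\varphi)$-framework.
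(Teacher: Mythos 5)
Your proof is correct, and its core idea coincides with the paper's: establish that $T$ is a contraction on the hyperspace and then fall back on a general fixed point theorem. The difference lies in how each step is executed. The paper's proof is a two-line appeal: it asserts, citing Theorem~\ref{NaSeTheorem1.9.}, that the union operator $T$ is a contraction on $\mathcal{H}(X)$, and then concludes "using Theorem~\ref{NaSeTheorem2.1.}" --- citations that do not obviously deliver either claim, since Theorem~\ref{NaSeTheorem1.9.} concerns a single weakly contractive self-map on an ordered partial metric space and Theorem~\ref{NaSeTheorem2.1.} is the four-map common fixed point theorem. You instead prove the contractivity of $T$ directly, via the union estimate $H_p\bigl(\bigcup_n U_n,\bigcup_n V_n\bigr)\le \max_n H_p(U_n,V_n)$ for the partial Hausdorff functional (which does hold, since $p(a,\bigcup_n V_n)\le p(a,V_m)$ for each $m$) combined with the pointwise bound $p(f_nx,f_ny)\le\lambda p(x,y)$, and you then linearize into the $(\psi,\varphi)$ framework with $\psi(t)=t$, $\varphi(t)=(1-\lambda)t$ so that condition~\eqref{t.OMSeq1.17} is verified and Theorem~\ref{NaSeTheorem4.7} applies verbatim. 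Your route is the more self-contained and logically tighter one: it makes the statement an actual corollary of Theorem~\ref{NaSeTheorem4.7}, as the wording of the corollary suggests it should be, whereas the paper's version buys brevity at the cost of citations that a careful reader cannot directly verify.
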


\begin{proof} \smartqed
	It follows from Theorem \ref{NaSeTheorem1.9.} that if each $f_{i}$ for $i=1,\dots,k$ is a contraction mapping on $X,$ then the mapping
$T:\mathcal{H}(X)\rightarrow \mathcal{H}(X)$ defined by
	\begin{equation*}
		T(A)=\bigcup _{n=1}^{k}f_{n}(A),\text{ for all }A\in \mathcal{H}(X)
	\end{equation*}
	is contraction on $\mathcal{H}\left( X\right) $. Using Theorem \ref{NaSeTheorem2.1.}, the
	result follows.
\qed \end{proof}

\begin{corollary}
	\label{NaSeCorollary4.11.} Let $(X,p)$ be a complete partial metric space and $(X;f_{n},n=1,\dots,k)$ an iterated function system where each $f_{i}$ for
	$i=1,\dots,k$ is a mapping on $X$ satisfying
	\begin{equation*}
		d\left( f_{i}x,f_{i}y\right) e^{d\left( f_{i}x,f_{i}y\right) -d\left(
			x,y\right) }\leq e^{-\tau }d\left( x,y\right) ,\text{ for all }x,y\in X,
		\text{ }f_{i}x\neq f_{i}y,
	\end{equation*}
	where $\tau >0.$ Then the mapping $T:\mathcal{H}(X)\rightarrow \mathcal{H}
	(X) $ defined in Theorem \ref{NaSeTheorem4.7} has a unique fixed point in $\mathcal{H}\left(
	X\right) .\ $Furthermore, for any set $A_{0}\in \mathcal{H}\left( X\right) $
	, the sequence of compact sets $\{A_{0},T\left( A_{0}\right) ,T^{2}\left(
	A_{0}\right) ,\dots\}$ converges to a fixed point of $T$.
\end{corollary}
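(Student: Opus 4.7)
The plan is to apply Theorem \ref{NaSeTheorem4.7} after verifying that the $F$-contraction-type hypothesis on each $f_i$ lifts to the generalized $(\psi,\varphi)$-contraction (\ref{t.OMSeq1.17}) for the set-valued operator $T$ on $(\mathcal{H}_p(X), H_p)$.

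First, I would read off natural control functions directly from the given inequality. Writing the hypothesis in the symmetric form $p(f_i x, f_i y)\, e^{p(f_i x, f_i y)} \le e^{-\tau}\, p(x,y)\, e^{p(x,y)}$, the natural choice is $\psi(t) = t e^t$ and $\varphi(t) = (1 - e^{-\tau})\, t e^t$. One then checks that $\psi$ is continuous and strictly increasing on $[0,\infty)$ with $\psi(t) = 0$ iff $t = 0$, and $\varphi$ is continuous (hence lower semicontinuous) with $\varphi(t) = 0$ iff $t = 0$, so $(\psi,\varphi)$ is an admissible pair in the sense of Definition \ref{NaSedefinition1.10.}. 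In these terms the hypothesis reads $\psi(p(f_i x, f_i y)) \le \psi(p(x,y)) - \varphi(p(x,y))$ for all $x, y \in X$ with $f_i x \neq f_i y$; the complementary case is trivial.

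The main step is to propagate this pointwise estimate to $(\mathcal{H}_p(X), H_p)$. Setting $g(s) = \psi^{-1}(\psi(s) - \varphi(s)) = \psi^{-1}(e^{-\tau} s e^s)$, the pointwise inequality rewrites as $p(f_i x, f_i y) \le g(p(x,y))$ with $g$ continuous and nondecreasing on $[0,\infty)$. Substituting into the definitions of $\delta_p$ and $H_p$ and commuting $\sup$ and $\inf$ with the monotone $g$ yields $H_p(f_i(A), f_i(B)) \le g(H_p(A,B))$ for each $i$ and all $A, B \in \mathcal{H}_p(X)$. Using the standard bound $H_p(T(A), T(B)) \le \max_{1 \le i \le k} H_p(f_i(A), f_i(B))$ for the finite union $T(A)=\bigcup_{n=1}^{k}f_n(A)$, monotonicity of $g$ then gives
\begin{equation*}
\psi\bigl(H_p(T(A), T(B))\bigr) \le \psi(H_p(A,B)) - \varphi(H_p(A,B)).
\end{equation*}
Since $H_p(A,B) \le M_T(A,B)$ and the map $s \mapsto \psi(s) - \varphi(s) = e^{-\tau} s e^s$ is nondecreasing on $[0,\infty)$, this upgrades to
\begin{equation*}
\psi\bigl(H_p(T(A), T(B))\bigr) \le \psi(M_T(A,B)) - \varphi(M_T(A,B)),
\end{equation*}
which is precisely (\ref{t.OMSeq1.17}). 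Existence and uniqueness of a fixed point $U \in \mathcal{H}_p(X)$ of $T$, together with convergence of the sequence $\{A_0, T(A_0), T^2(A_0), \dots\}$ to $U$ for any $A_0$, then follow directly from Theorem \ref{NaSeTheorem4.7}.

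The main technical obstacle is the second step: transferring the $F$-contractive pointwise bound to a bound involving the partial Hausdorff distance $H_p$. The key ingredients are the continuity and monotonicity of $g$, which allow it to pass through the $\sup$-$\inf$ appearing in $\delta_p$ and $H_p$, and the monotonicity of $\psi - \varphi$, which permits enlarging the right-hand side from $H_p(A,B)$ to $M_T(A,B)$ without breaking the inequality; care must also be taken that the arguments remain valid in the partial-metric setting, where $p(x,x)$ need not vanish, but this causes no difficulty since Proposition \ref{NaSeProposition4.3.} supplies the required properties of $H_p$ on $\mathcal{H}_p(X)$.
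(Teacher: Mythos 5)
Your argument is correct in substance but follows a genuinely different route from the paper's. The paper's proof is essentially a Wardowski-style $F$-contraction argument: it observes that the hypothesis is the $F$-contraction condition $\tau+F(d(f_ix,f_iy))\leq F(d(x,y))$ for $F(\lambda)=\ln\lambda+\lambda$, asserts (citing Theorem \ref{NaSeTheorem1.11}, which as stated in this paper is actually a $(\psi,\phi)$-weak contraction theorem and does not literally support this step) that the same inequality passes to the Hausdorff functional $H$ on $\mathcal{H}(X)$, and then concludes by invoking Theorem \ref{NaSeTheorem2.1.}; the lifting to $H$ is asserted without proof. You instead exhibit explicit control functions $\psi(t)=te^{t}$ and $\varphi(t)=(1-e^{-\tau})te^{t}$, so that the hypothesis becomes $\psi(p(f_ix,f_iy))\leq\psi(p(x,y))-\varphi(p(x,y))$, lift this to $H_p(T(A),T(B))\leq g(H_p(A,B))$ with $g=\psi^{-1}\circ(\psi-\varphi)$ via monotonicity and continuity of $g$ together with the finite-union bound $H_p(T(A),T(B))\leq\max_i H_p(f_i(A),f_i(B))$, and then relax $H_p(A,B)$ to $M_T(A,B)$ using monotonicity of $\psi-\varphi$ to land exactly in hypothesis (\ref{t.OMSeq1.17}) of Theorem \ref{NaSeTheorem4.7}. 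Your route is more self-contained and actually supplies the lifting argument the paper omits, at the cost of a page of bookkeeping; the paper's is shorter but leans on an unjustified (and miscited) transfer step.

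One caveat you should make explicit: your remark that ``the complementary case is trivial'' when $f_ix=f_iy$ is only trivial if $d$ is a genuine metric, since then $d(f_ix,f_iy)=0\leq g(d(x,y))$. If the distance is read as the partial metric $p$, then $p(f_ix,f_ix)$ need not vanish and the hypothesis gives no control in that case (consider a constant map with positive self-distance). This ambiguity is inherited from the statement itself, which posits a partial metric space but writes the contraction in terms of $d$, and the paper's own proof silently makes the same assumption; it is a defect of the corollary rather than of your argument, but it deserves a sentence.
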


\begin{proof} \smartqed
	Take $F\left( \lambda \right) =\ln \left(
	\lambda \right) +\lambda ,$ $\lambda >0$ in Theorem \ref{NaSeTheorem1.11}, then each mapping $
	f_{i}$ for $i=1,\dots,k$ on $X$ satisfies%
	\begin{equation*}
		d\left( f_{i}x,f_{i}y\right) e^{d\left( f_{i}x,f_{i}y\right) -d\left(
			x,y\right) }\leq e^{-\tau }d\left( x,y\right) ,\text{ for all }x,y\in X,
		\text{ }f_{i}x\neq f_{i}y,
	\end{equation*}
	where $\tau >0.$ Again from Theorem \ref{NaSeTheorem1.11}, the mapping $T:\mathcal{H}
	(X)\rightarrow \mathcal{H}(X)$ defined by
	\begin{equation*}
		T(A)=\bigcup _{n=1}^{k}f_{n}(A),\text{ for all }A\in \mathcal{H}(X)
	\end{equation*}
	satisfies%
	\begin{equation*}
		H\left( T\left( A\right) ,T\left( B\right) \right) e^{H\left( T\left(
			A\right) ,T\left( B\right) \right) -H\left( A,B\right) }\leq e^{-\tau
		}H\left( A,B\right),
	\end{equation*}
	for all $A,B\in \mathcal{H}(X),$ $H\left( T\left( A\right) ,T\left( B\right)
	\right) \neq 0$. Using Theorem \ref{NaSeTheorem2.1.}, the result follows.
\qed \end{proof}

\begin{corollary}
	\label{NaSeCorollary4.12.}
	Let $(X,p)$ be a complete partial
	metric space and $(X;f_{n},n=1,\dots, k)$ be iterated function system such that each $f_{i}$
	for $i=1,\dots,k$ is a mapping on $X$ satisfying
	\begin{equation*}
		d\left( f_{i}x,f_{i}y\right) (d\left( f_{i}x,f_{i}y\right) +1)\leq e^{-\tau
		}d\left( x,y\right) (d\left( x,y\right) +1),\text{ for all }x,y\in X,\
		f_{i}x\neq f_{i}y,
	\end{equation*}
	where $\tau >0.$ Then the mapping $T:\mathcal{H}(X)\rightarrow \mathcal{H}%
	(X) $ defined in Theorem \ref{NaSeTheorem2.1.} has a unique fixed point in $\mathcal{H}\left(
	X\right).$ Furthermore, for any set $A_{0}\in \mathcal{H}\left( X\right) $,
	the sequence of compact sets $\{A_{0},T\left( A_{0}\right) ,T^{2}\left(
	A_{0}\right) ,\dots\}$ converges to a fixed point of $T$.
\end{corollary}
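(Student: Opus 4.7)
My plan is to mimic the argument given in Corollary \ref{NaSeCorollary4.11.}, replacing the auxiliary function $F(\lambda)=\ln(\lambda)+\lambda$ by a choice tailored to the new inequality. Observe that the hypothesis
\[
d(f_{i}x,f_{i}y)\bigl(d(f_{i}x,f_{i}y)+1\bigr)\leq e^{-\tau}\, d(x,y)\bigl(d(x,y)+1\bigr)
\]
is equivalent, after taking logarithms, to
\[
\tau + \ln\bigl(d(f_{i}x,f_{i}y)(d(f_{i}x,f_{i}y)+1)\bigr) \leq \ln\bigl(d(x,y)(d(x,y)+1)\bigr).
\]
So the first step is to pick $F(\lambda)=\ln\bigl(\lambda(\lambda+1)\bigr)=\ln\lambda+\ln(\lambda+1)$ for $\lambda>0$, verify that $F$ satisfies the monotonicity/continuity requirements used in Theorem \ref{NaSeTheorem1.11}, and rewrite the condition on each $f_{i}$ as an $F$-contraction with modulus $\tau$.

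Next, I would invoke Theorem \ref{NaSeTheorem1.11} with this choice of $F$ to transfer the $F$-contraction property from the point level $(X,p)$ to the hyperspace level $(\mathcal{H}(X),H_{p})$. Concretely, the claim is that the mapping
\[
T(A)=\bigcup_{n=1}^{k} f_{n}(A),\qquad A\in\mathcal{H}(X),
\]
satisfies the corresponding Hausdorff-type inequality
\[
H_{p}(T(A),T(B))\bigl(H_{p}(T(A),T(B))+1\bigr)\leq e^{-\tau}\, H_{p}(A,B)\bigl(H_{p}(A,B)+1\bigr),
\]
for all $A,B\in\mathcal{H}(X)$ with $T(A)\neq T(B)$. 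This is the routine union/supremum lift of an $F$-contraction to the partial Hausdorff metric, exactly the step carried out in the proof of Corollary \ref{NaSeCorollary4.11.}.

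Finally, the inequality above implies the generalized $(\psi,\varphi)$-contractive condition \eqref{t.OMSeq1.17} of Theorem \ref{NaSeTheorem4.7} for suitable control functions $\psi,\varphi$ (indeed, $\psi(t)=t(t+1)$ is continuous, nondecreasing, zero only at $0$, and one can set $\varphi(t)=(1-e^{-\tau})\psi(t)$, which is lower semicontinuous and vanishes only at $0$). Applying Theorem \ref{NaSeTheorem4.7} to $T$ then yields a unique fixed point $U\in\mathcal{H}(X)$ of $T$ together with convergence of every orbit $\{A_{0},T(A_{0}),T^{2}(A_{0}),\dots\}$ in the partial Hausdorff metric to $U$, which is what the corollary asserts.

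The routine calculations are straightforward; the only genuine obstacle is the middle step of propagating the $F$-contractivity from each $f_{i}$ to the union map $T$ in the partial Hausdorff metric. Because the function $\lambda\mapsto \lambda(\lambda+1)$ is strictly increasing on $[0,\infty)$, this lift reduces to the familiar estimate $H_{p}\bigl(\bigcup_{n} f_{n}(A),\bigcup_{n} f_{n}(B)\bigr)\leq \max_{n} H_{p}(f_{n}(A),f_{n}(B))$ combined with the pointwise inequality, so the difficulty is essentially bookkeeping rather than substantive.
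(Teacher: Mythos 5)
Your proposal is correct and follows essentially the same route as the paper: choose an auxiliary function $F$, use Theorem \ref{NaSeTheorem1.11} to lift the contractive inequality from $(X,p)$ to the hyperspace with the partial Hausdorff metric, and then conclude via the main fixed point theorem. Two minor points in your favour: your choice $F(\lambda)=\ln\bigl(\lambda(\lambda+1)\bigr)$ is actually the one that reproduces the displayed hypothesis exactly (the paper takes $F(\lambda)=\ln(\lambda^{2}+\lambda)+\lambda$, which would introduce an extra factor $e^{\,d(f_{i}x,f_{i}y)-d(x,y)}$), and your explicit reduction to the $(\psi,\varphi)$-condition with $\psi(t)=t(t+1)$, $\varphi(t)=(1-e^{-\tau})\psi(t)$ before invoking Theorem \ref{NaSeTheorem4.7} spells out a step the paper leaves implicit in the phrase ``Using Theorem \ref{NaSeTheorem2.1.}, the result follows.''
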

\begin{proof} \smartqed By taking $F\left( \lambda \right) =\ln
	\left( \lambda ^{2}+\lambda \right) +\lambda ,$ $\lambda >0$ in Theorem
	\ref{NaSeTheorem1.11}, we obtain that each mapping $f_{i}$ for $i=1,\dots,k$ on $X$ satisfies
	\begin{equation*}
		d\left( f_{i}x,f_{i}y\right) (d\left( f_{i}x,f_{i}y\right) +1)\leq e^{-\tau
		}d\left( x,y\right) (d\left( x,y\right) +1),\text{ for all }x,y\in X,\
		f_{i}x\neq f_{i}y,
	\end{equation*}
	where $\tau >0.$ Again it follows from Theorem \ref{NaSeTheorem1.11} that the mapping $T:%
	\mathcal{H}(X)\rightarrow \mathcal{H}(X)$ defined by
	\begin{equation*}
		T(A)=\bigcup_{n=1}^{k}f_{n}(A),\text{ for all }A\in \mathcal{H}(X)
	\end{equation*}
	satisfies
	\begin{equation*}
		H\left( T\left( A\right) ,T\left( B\right) \right) (H\left( T\left( A\right)
		,T\left( B\right) \right) +1)\leq e^{-\tau }H\left( A,B\right) (H\left(
		A,B\right) +1),
	\end{equation*}
	for all $A,B\in \mathcal{H}(X),$ $H\left( T\left( A\right) ,T\left( B\right)
	\right) \neq 0$. Using Theorem \ref{NaSeTheorem2.1.}, the result follows.
\qed \end{proof}

\begin{corollary}
	\label{NaSeCorollary4.13.} Let $(X,p)$ be a complete partial
	metric space and $(X;f_{n},n=1,\dots, k)$ be iterated function system such that each $f_{i}$
	for $i=1,\dots,k$ is a mapping on $X$ satisfying
	\begin{equation*}
		d\left( f_{i}x,f_{i}y\right) \leq \frac{1}{(1+\tau \sqrt{d\left( x,y\right)})}d\left( x,y\right) ,\text{ for all }x,y\in X,\text{ }f_{i}x\neq f_{i}y,
	\end{equation*}
	where $\tau >0.$ Then the mapping $T:\mathcal{H}(X)\rightarrow \mathcal{H}
	(X) $ defined in Theorem \ref{NaSeTheorem4.7} has a unique fixed point $\mathcal{H}\left(
	X\right).$ Furthermore, for any set $A_{0}\in \mathcal{H}\left( X\right) $,
	the sequence of compact sets $\{A_{0},T\left( A_{0}\right) ,T^{2}\left(
	A_{0}\right) ,\dots\}$ converges to a fixed point of $T$.
\end{corollary}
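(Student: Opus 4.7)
The plan is to deduce this corollary from Theorem 4.7, following exactly the template used in the proofs of Corollaries 4.11 and 4.12. The key step is to show that the hypothesis placed on each constituent map $f_i$ is an instance of a Wardowski-type $F$-contraction, which then lifts to a $(\psi,\phi)$-type contraction for the Hutchinson-type operator $T(A)=\bigcup_{n=1}^{k}f_n(A)$ on $(\mathcal{H}_p(X),H_p)$.

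First, I would introduce the auxiliary function $F(\lambda)=-1/\sqrt{\lambda}$ for $\lambda>0$ and observe that the Wardowski condition $F(d(f_ix,f_iy))\leq F(d(x,y))-\tau$ is algebraically equivalent to
\[
d(f_ix,f_iy)\;\leq\;\frac{d(x,y)}{\bigl(1+\tau\sqrt{d(x,y)}\bigr)^{2}},
\]
which in turn implies the contractive inequality stated in the corollary because $(1+\tau\sqrt{d})^{2}\geq 1+\tau\sqrt{d}$. This identifies the hypothesis as a specialization of the $F$-contraction framework underlying Theorem 1.11, so each $f_i$ falls under the scope of that theorem just as in Corollaries 4.11 and 4.12.

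Next, I would lift the pointwise contraction to $\mathcal{H}_p(X)$: because the estimate holds uniformly for every pair of points drawn from arbitrary $A,B\in\mathcal{H}_p(X)$, taking suprema in the definitions of $\delta_p(T(A),T(B))$ and $\delta_p(T(B),T(A))$ and then the maximum yields
\[
H_p(T(A),T(B))\;\leq\;\frac{H_p(A,B)}{1+\tau\sqrt{H_p(A,B)}}
\]
whenever $H_p(T(A),T(B))\neq 0$. Setting $\psi(t)=t$ and $\phi(t)=t-t/(1+\tau\sqrt{t})$ (which is continuous, nondecreasing, nonnegative, and vanishes only at $t=0$), this inequality takes precisely the form $\psi(H_p(T(A),T(B)))\leq\psi(M_T(A,B))-\phi(M_T(A,B))$ required by Theorem 4.7, since $H_p(A,B)\leq M_T(A,B)$. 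Applying Theorem 4.7 then delivers both the existence and uniqueness of a fixed point $U\in\mathcal{H}(X)$ of $T$ and the convergence $T^{n}(A_0)\to U$ for every initial $A_0\in\mathcal{H}(X)$.

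The main obstacle is the lifting step: one needs to verify that the scalar-valued pointwise contraction bound actually produces the corresponding bound for both $\delta_p(T(A),T(B))$ and $\delta_p(T(B),T(A))$ simultaneously, rather than only a one-sided excess estimate. Here it is essential that each $f_i$ satisfies the contraction uniformly, so that the sup-inf structure of $H_p$ cooperates with the monotonicity of $t\mapsto t/(1+\tau\sqrt{t})$; once this monotonicity is exploited, the remaining work is routine verification that $(\psi,\phi)$ are admissible control functions and that Theorem 4.7 applies verbatim.
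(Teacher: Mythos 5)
Your proposal is correct in substance and, in fact, tighter than the paper's own argument. The paper's proof proceeds by ``taking $F(\lambda)=-1/\sqrt{\lambda}$ in Theorem \ref{NaSeTheorem1.11}'' to obtain the pointwise inequality with the \emph{squared} denominator $(1+\tau\sqrt{d(x,y)})^{2}$, asserts without detail that the Hutchinson operator $T$ inherits the analogous inequality for $H$, and then concludes by ``using Theorem \ref{NaSeTheorem2.1.}'' --- citations that do not literally apply, since Theorem \ref{NaSeTheorem1.11} contains no function $F$ and Theorem \ref{NaSeTheorem2.1.} concerns four self-maps on $X$ rather than $T$ on $\mathcal{H}_p(X)$. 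You instead work directly with the hypothesis as stated, supply the lifting lemma (suprema/infima in $\delta_p$ commute with the continuous nondecreasing gauge $t\mapsto t/(1+\tau\sqrt{t})$), exhibit explicit admissible control functions $\psi(t)=t$ and $\phi(t)=t-t/(1+\tau\sqrt{t})$, use the monotonicity of the gauge together with $H_p(A,B)\leq M_T(A,B)$ to verify \eqref{t.OMSeq1.17}, and invoke Theorem \ref{NaSeTheorem4.7} --- which is the theorem that actually governs $T$. This buys you a proof of the corollary \emph{as stated} (with the un-squared denominator), whereas the paper's route only establishes it under the strictly stronger squared-denominator hypothesis. One slip to correct: your opening identification of the hypothesis as ``a specialization of the $F$-contraction framework'' runs the implication backwards --- the $F$-contraction condition implies the corollary's hypothesis, not conversely, so you cannot recover the $F$-contraction from what is assumed; fortunately your actual argument never uses it, so this is a removable remark rather than a gap. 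You might also note, as a caveat you share with the paper, that the contractive condition is only assumed when $f_ix\neq f_iy$, and in a partial metric $p(f_ix,f_ix)$ need not vanish, so the lifting step silently assumes the inequality extends to such pairs.
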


\begin{proof} \smartqed Take $F\left( \lambda \right) =-1/\sqrt{
		\lambda },$ $\lambda >0$ in Theorem \ref{NaSeTheorem1.11}, then each mapping $f_{i}$ for $
	i=1,\dots,k$ on $X$ satisfies
	\begin{equation*}
		d\left( f_{i}x,f_{i}y\right) \leq \frac{1}{(1+\tau \sqrt{d\left( x,y\right) }
			)^{2}}d\left( x,y\right) ,\text{ for all }x,y\in X,\ f_{i}x\neq f_{i}y,
	\end{equation*}
	where $\tau >0.$ Again it follows from Theorem \ref{NaSeTheorem1.11} that the mapping
$T: \mathcal{H}(X)\rightarrow \mathcal{H}(X)$ defined by
	\begin{equation*}
		T(A)=\bigcup\limits_{n=1}^{k}f_{n}(A),\text{ for all }A\in \mathcal{H}(X)
	\end{equation*}
	satisfies%
	\begin{equation*}
		H\left( T\left( A\right) ,T\left( B\right) \right) \leq \frac{1}{(1+\tau
			\sqrt{H\left( A,B\right) })^{2}}H\left( A,B\right),
	\end{equation*}
	for all $A,B\in \mathcal{H}(X),$ $H\left( T\left( A\right), T\left( B\right)
	\right) \neq 0$. Using Theorem \ref{NaSeTheorem2.1.}, the result follows.
\qed \end{proof}



\end{document}